\documentclass[11pt]{amsart}
\usepackage{amsmath,amsfonts,amssymb,amsthm,amscd,latexsym,euscript,verbatim,
bbold}
\usepackage[all]{xy}

\makeatletter
\def\section{\@startsection{section}{1}%
  \z@{1.1\linespacing\@plus\linespacing}{.8\linespacing}%
  {\normalfont\Large\scshape\centering}}
\makeatother

\theoremstyle{plain}

\newtheorem*{thmAl}{Albert's Fusion Theorem}

\newtheorem*{conj*}{Root Groups Conjecture}
\newtheorem*{thm1.2}{(1.2) Theorem}
\newtheorem*{thm1.3}{(1.3) Theorem}
\newtheorem*{thm1.4}{(1.4) Theorem}
\newtheorem*{prop*}{Proposition}
\newtheorem*{thm*}{Theorem}

\def\eroman{\etype{\roman}}
\def\diag{\operatorname{diag}}

\newtheorem{prop}{Proposition}[section]

\newtheorem{thm}[prop]{Theorem}
\newtheorem{cor}[prop]{Corollary}
\newtheorem{lemma}[prop]{Lemma}

\theoremstyle{definition}

\newtheorem*{Def*}{Definition}
\newtheorem{Defs}[prop]{Definitions}

\newtheorem{examples}[prop]{Examples}

\newtheorem{notation}[prop]{Notation}
\newtheorem*{notation*}{Notation}
\newtheorem{remark}[prop]{Remark}

\newtheorem{rem}[prop]{Remark}

\newcommand{\etype}[1]{\renewcommand{\labelenumi}{(#1{enumi})}}

\newcommand{\la}{\lambda}

\newcommand{\ff}{F}

\newcommand{\zz}{\mathbb{Z}}

\newcommand{\ga}{\alpha}
\newcommand{\gb}{\beta}
\newcommand{\gc}{\gamma}

\newcommand{\gd}{\delta}

\newcommand{\gre}{\epsilon}
\newcommand{\gl}{\lambda}

\newcommand{\gr}{\rho}
\newcommand{\gs}{\sigma}

\newcommand{\gt}{\tau}

\newcommand{\charc}{{\rm char}}

\newcommand{\Aut}{{\rm Aut}}

\newcommand{\half}{\textstyle{\frac{1}{2}}}

\newcommand{\widebar}[1]{\overset{\mskip1mu\hrulefill\mskip1mu}{#1}
                \vphantom{#1}}

\numberwithin{equation}{section}

\hyphenation{Tim-mes-feld}

\begin{document}
\title[Axes in non-associative algebras]{Axes  in non-associative algebras}
\author[Louis Rowen, Yoav Segev]
{Louis Rowen$^*$\qquad Yoav Segev}

\address{Louis Rowen\\
         Department of Mathematics\\
         Bar-Ilan University\\
         Ramat Gan\\
         Israel}
\email{rowen@math.biu.ac.il}
\address{Yoav Segev \\
         Department of Mathematics \\
         Ben-Gurion University \\
         Beer-Sheva 84105 \\
         Israel}
\email{yoavs@math.bgu.ac.il}
\thanks{$^*$The first author was supported by the Israel Science Foundation grant 1623/16}

\keywords{axial algebra, axis, flexible algebra, power-associative,
fusion rule, idempotent, Jordan type} \subjclass[2010]{Primary: 17A15, 17A20, 17D99;
 Secondary: 17A01, 17A36, 17C27}

\begin{abstract}
``Fusion rules'' are laws of multiplication among eigenspaces of an
idempotent. This terminology is relatively new and is closely
related to axial algebras, introduced recently by Hall, Rehren and
Shpectorov. Axial algebras, in turn, are closely related to
$3$-transposition groups and Vertex operator algebras.

In this paper we consider fusion rules for semisimple idempotents,
following Albert in the  power-associative case.  We examine the
notion of an axis in the non-commutative setting and show that the
dimension $d$ of any
 algebra $A$ generated by a pair $a,b$ of (not necessarily Jordan)
axes of respective types $(\gl,\gd)$ and $(\gl',\gd')$ must be at
most $5$; $d$ cannot be $4.$ If $d\le 3$ we list all the
possibilities for $A$ up to isomorphism.

We prove a variety of additional results and mention some research questions at the end.
\end{abstract}

\date{\today}
\maketitle
\section{Introduction}
Our goal in this paper is to further the study of axes, i.e.,
semi-simple idempotents, in an arbitrary algebra $A.$
 Throughout this paper $A$ is an algebra (not necessarily
associative or commutative, not necessarily with a multiplicative unit element)
over a field $\ff.$

Let $a\in A$ be an idempotent, and $\gl,\gd\in\ff.$

\begin{enumerate}\eroman
\item We define the {\it left and right} multiplication maps
$L_a(b) := a\cdot b $ and
$R_a(b) := b\cdot a.$

 \item
We write $A_{\la}(X_a)$ for the eigenspace
 of $\la$ with respect to the transformation $X_a$, $X\in \{L,R\},$ i.e.,
$A_{\la}(L_a) = \{ v \in A: a\cdot v = \la v\},$
and similarly for $A_{\la}(R_a).$ Often we just write $A_\la$ for
$A_{\gl}(L_a)$, when $a$ is understood.
We note that it may happen that $A_{\gl}(X_a)=0.$

\item
We write $A_{\gl,\gd}(a):=A_{\gl}(L_a)\cap A_{\gd}(R_a).$
We just write $A_{\gl,\gd }$ when the idempotent $a$ is  understood.
An element in $A_{\gl,\gd }(a)$ will be called a $(\gl,\gd)$-{\it
eigenvector} of $a,$  and $(\gl,\delta)$ will be called its {\it
eigenvalue}
\end{enumerate}

An idempotent $a\in A$ is a {\it left axis}  (resp.~{\it right} axis) if $L_a$ (resp.~$R_a$) is a semi-simple operator.
An {\it axis} is an idempotent $a$ which is both a left and a right axis,
and satisfies $L_aR_a=R_aL_a.$
If $A$ is associative with an identity element,  then $A=A_{1,1}(a)+A_{0,0}(a)+A_{1,0}(a)+A_{0,1}(a),$
for any idempotent $a\in A.$

Axes $a$ in a power-associative algebra $A$ (although not under that name)
were already studied by Albert \cite{A}.  He showed that
when $A$ is commutative, $A=A_{1,1}(a)+A_{0,0}(a)+A_{\tiny{1/2,\, 1/2}}(a),$
for any idempotent $a\in A.$
In particular this is the case when $A$ is a Jordan algebra.

In \cite{HRS}, Hall, Rehren and Shpectorov introduced
axial algebras.  These are commutative algebras, which
are not necessaily power associative, and which are generated by axes.
Axial algebras, and, in particular,
``primitive axial algebras of Jordan type'' are of interest
because of their connection with
group theory and with Vertex operator algebras.  We refer the reader
to the introduction of \cite{HRS} for further information.

Axial (composition) algebras are
(non-associative) algebras generated by axes.
These algebras are not necessarily power associative
and not necessarily commutative.  However, they
are generated by axes satisfying certain {\it fusion rules}
(see \cite{DPSC} for the most general notion of fusion
rules and for the notion of decomposition algebra).

Our main interest in this paper, continuing our work in \cite{RoSe},
is to place (commutative) axial algebras
  in a general non-commutative
framework.  We hope that in addition to being
interesting in its own right, this information might be used to
understand (and prove) the finite dimensionality of various finitely
generated primitive axial algebras.

\begin{Defs}\label{defs main20}$ $
\begin{enumerate}
\item
The algebra $A$ is {\it flexible} if it
satisfies the identity $(xy)x=x(yx).$
In a flexible algebra we write $xyx$ without
parentheses since there is no ambiguity.

\item
$A$ is {\it power-associative} if $F[x]$ is associative (and therefore
commutative) for each $x \in A.$
\end{enumerate}
\end{Defs}

Commutative algebras are flexible since
\begin{equation}\label{flex1}
(xy)x=(yx)x=x(yx).
\end{equation}

We first study eigenvalues of idempotents $a\in A$ in the spirit of
\cite{A}.  We often assume  flexibility.
In subsection \ref{id} we also often assume that $A$ is power-associative,
relying heavily on  Albert~\cite{A}.

\begin{Defs}\label{defs main3}$ $
\smallskip
Let $a\in A$ be an idempotent, and $\gl,\gd\notin\{0,1\}$ in $\ff$.
\begin{enumerate}
\item
 $a$ is a {\it left axis}  if $a$ is a left semisimple idempotent,
 i.e.,  $L_a $ satisfies a polynomial $p[t]=t(t-1)\prod_{i=1}^m (t - \gl _i)$ with only simple roots $0, 1, \gl_1, \dots, \gl _m.$
 Note that we do not require $p[t]$ to be the minimal polynomial of $L_a.$  We call $\gl_1, \dots, \gl _m$
 the {\it type} of $a.$ The left axis $a$ is {\it primitive} if $A_1 = \ff a$.

For a primitive left axis $a$ of type $\gl_1, \dots, \gl _m,$ and $x\in A,$ we write
\[
\textstyle{x=\ga_x a+x_0+\sum_{i=1}^m x_{\gl_i},\qquad x_{\gl_i}\in A_{\gl_i}(L_a).}
\]
and we call it {\it the left decomposition of $x$ with respect to $a$}.

 \item
Given a left axis $a,$ let $B:=A_1+A_0.$ We say that
$a$ satisfies the {\it left basic fusion rule} if
\begin{itemize}
\item[(i)]
$B$ is a subalgebra of $A.$

\item[(ii)]
$BA_\gl =  A_\gl B\subseteq A_{\gl}$ for each eigenvalue $\gl \ne 0,1.$

\item[(iii)] For each $\gl$ there is $\gl'$ such that $A_{\gl}A_{\gl '} \subseteq B.$
\end{itemize}
The {\it left involutory fusion rules} are the basic
fusion rules together with $A_{\gl}^2\subseteq B,$ for each $\gl.$

\item
Thus $a$ is a primitive left  axis {\it  of type $\gl$} with the
left involutory fusion rule if
\begin{itemize}
\item[(i)]
$(L_a-\gl)(L_a-1)L_a =0.$ (In particular the only left eigenvalues
are contained in $\{0,1,\gl\}$.)

\item[(ii)]  There is a direct sum decomposition of
$A$ which is a $\zz_2$-grading:
\[
A=\overbrace{\ff a\oplus A_{0}}^{\text {$+$}}\oplus
\overbrace{A_{\gl}}^{\text{$-$}},
\]
recalling that $A_{\gl}$ means $A_{\gl}(L_a)$.
\end{itemize}

\item
A {\it right axis of type $\gd_1,\dots,\gd_n$} is defined similarly.
Also the right fusion rules are defined as in (2). As with a left axis,
for a right axis~$a$ and $x\in A,$ we write
\[
\textstyle{x=\gb_x a+\, {}_0x+\sum_{i=1}^m \, {}_{\gl_i}x,\qquad {}_{\gl_i}x\in A_{\gl_i}(R_a).}
\]
and we call it {\it the right decomposition of $x$ with respect to $a$}.

\item
$a$ is a (2-sided) {\it primitive axis   of type $(\gl_1,\dots,
\gl_m; \gd_1,\dots,\gd_n)$ } if $a$ is a primitive left axis of
type $\gl_1,\dots, \gl_m$ and a primitive right axis of type
$\gd_1,\dots,\gd_n,$ satisfying the left and right involutory fusion rules, and, in addition,
 $L_a R_a =  R_a  L_a.$
 In particular, $a$ is a primitive  axis   of type  $(\gl,\gd),$ when $m=n=1.$

Note that for any primitive axis $a$, $A_{1,1} =\ff a =A_1(L_a)=A_1(R_a)$; in
particular $A_{1,\mu} = A_{\nu,1} = 0,$ for any $\nu,\mu$ distinct from $1.$
So if $a$ is a primitive axis, and $x\in A,$ we write
\[
\textstyle{x=\ga_x a+x_{0,0}+\sum_{i=1}^m x_{\gl_i,0}+\sum_{j=1}^n x_{0,\gd_i}+\sum_{i,j}x_{\gl_i,\gd_j}.}
\]
and we call it {\it the decomposition of $x$ with respect to $a$}.

\item
(\cite{RoSe})  An axis $a$ is of {\it Jordan type $(\gl,\gd)$}
if $A_{\gl,0}(a)=0=A_{0,\gd}(a).$  Note that in this case
\[
x=\ga_x a+x_{0,0}+x_{\gl,\gd},\qquad \forall x\in A.
\]
In this case we sometimes call $a$ an axis of Jordan type.

\end{enumerate}
\hfill$\Diamond$
\end{Defs}

Hence, if $a$ is an axis of type $(\gl,\gd),$ then
\[
A=\overbrace{A_{1,1} \oplus A_{0,0}}^{\text {$++$}}\oplus
\overbrace{A_{0,\gd}}^{\text{$+-$}} \oplus \overbrace{
A_{\la,0}}^{\text {$-+$}}\oplus \overbrace{A_{\gl,\gd
}}^{\text{$--$}},
\]
is $\zz_2\times\zz_2$ grading of $A$ (multiplication $(\gre,\gre')(\gr,\gr')$
is defined in the obvious way for $\gre,\gre',\gr,\gr'\in\{+,-\}$).

 If $a$ is an axis of Jordan type $(\gl,\gd),$ then
 \[
A = A_{1} \oplus A_{0} \oplus A_{\gl,\gd }.
\]
Jordan type is close to
commutative, but there are natural non-commutative examples given in
\cite[Examples~2.6]{RoSe}. Also see Examples~\ref{notJJ} for axes
not of Jordan type.

Note that besides being non-commutative,   Definition \ref{defs
main3}(2) {\bf does not} require the condition that $A_{0}$ is a
subalgebra, contrary to the usual hypothesis in the commutative
theory of primitive axial algebras. But this condition does not seem
to pertain to any of the proofs.

If $a$ is a primitive axis of type $(\gl,\gd),$ then by
Definition~\ref{defs main3}(5),  we can write $x\in A$ as
\begin{equation}\label{eq decomposition}
x=\ga_x a+x_{0,0}+x_{0,\gd}+x_{\gl,0}+x_{\gl,\gd},\qquad
x_{\mu,\nu}\in A_{\mu,\nu}(a),
\end{equation}
which we call  the {\it $(2$-sided$)$ decomposition of} $x$ with respect
to $a.$

When $a$ is of Jordan type, we have the much simpler decomposition
\[
x = \ga_xa +x _0 + x_{\gl,\gd}
\]
with $x_0\in A_{0,0},$ and $x_{\gl,\gd}\in A_{\gl,\gd },$ which is both
the left decomposition and right decomposition.

Next, for $\gl\in\ff,$ write
\begin{equation}\label{eigenspaces}
\mathring{A}_{\gl}(a) =\{x\in A\mid ax+xa=2\gl x\}.
\end{equation}
(We write $ \mathring{A}_{\gl},$ when $a$ is understood.) Albert
proved for $A$ power-associative  with $\charc(F)\ne 2,3,$ that
\[
A=A_{1,1} \oplus A_{0,0}\oplus \mathring{A}_{1/2}.
\]
This is reproved   directly as Theorem~\ref{axi}(i). The following
theorem of Albert then describes the appropriate fusion laws in a
power-associative algebra~$A:$

%
\begin{thmAl}[{\cite[Theorem 3, p.~560, Theorem 5, p.~562]{A}}]
Suppose that $A$ is power-associative with $\charc(F)\ne 2,3.$ Let
$a\in A$ be an idempotent (not necessarily primitive).

\begin{enumerate}\eroman
\item
$A=A_{1,1} \oplus A_{0,0}\oplus \mathring{A}_{1/2}.$

\item
$A_{0,0}, A_{1,1}$ are subalgebras.

\medskip
\noindent Furthermore for $A$ flexible,
\item  $A_{\la,\gl} \mathring{A}_{1/2} \subseteq A_{1 -\la,1 -\la}+ \mathring{A}_{1/2}, \quad \mathring{A}_{1/2} A_{\la,\la}  \subseteq A_{1 -\la,1 -\la}+ \mathring{A}_{1/2} ,\quad$ for
$\la = 0,1.$
\item
$a \mathring{A}_{1/2} ,  \mathring{A}_{1/2} a\subseteq
\mathring{A}_{1/2}.$\hfill {$\Diamond$}
\end{enumerate}
\end{thmAl}

We prove
\smallskip

\noindent
{\bf Theorem A} (Theorem \ref{thm 2-gen}).
Assume that $A$ is generated by two axes $a, b$
of type $(\gl,\gd)$ and $(\gl',\gd').$
Then the dimension of $A$ is at most $5.$
\medskip

In \cite[Theorem 2.5]{RoSe}, we classified all algebras
as in Theorem A, of dimension $2.$  We prove
\smallskip

\noindent
{\bf Theorem B} (Theorem \ref{thm dim3}).
Assume that $A$ is generated by two axes $a, b$
of type $(\gl,\gd)$ and $(\gl',\gd').$ If $\dim(A)=3,$
then $a$ and $b$ are of Jordan type, and hence
the possibilities for $A$ are given in \cite[Theorem B]{RoSe}.
\smallskip

\noindent
{\bf Proposition C}.
Assume that $A$ is generated by two axes $a, b$
of type $(\gl,\gd)$ and $(\gl',\gd').$  Then
\begin{enumerate}
\item
(Lemma \ref{ab in Fa+Fb}) If $ab\in\ff a+\ff b,$ then $\dim(A)=2.$

\item
(Lemma \ref{lem ab=0}) If $ab=0,$ then $ba=0.$
\end{enumerate}

We prove a number of additional results and we mention
some questions in \S5.

\begin{remark}
Along the period that this paper was refereed we
proved that there are no algebras of dimension $4$ generated by
two primitive axes of type $(\gl,\gd)$ and $(\gl',\gd').$  This
result will appear somewhere else.  Thus, in view of Theorem A, it remains to consider
such algebras of dimension $5.$  See the questions in \S 5.
\end{remark}

\section{General results about axes}\label{nc11}$ $

The next proposition  provides a useful decomposition result into
eigenspaces.

\begin{prop}\label{prop decomposition}
Suppose $a$ is a left axis, where $L_a$ satisfies the polynomial $\prod_{i=1}^m(x-\mu_i),$
with only simple roots $\mu_1, \dots, \mu_m.$
For $y \in A$, write $y = \sum_{j=1}^m  y_j,$ where $y_j$ is the
left $\mu_j$-eigenvector in the $L_a$-eigenvector decomposition of
$y$, and define the vector space $V_a(y) = \sum _{i=0}^{m-1}
\ff\L_a^iy.$ Then
\[V_a(y) =
\oplus_{j=1}^m \ff y_j.
\]
\end{prop}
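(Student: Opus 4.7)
The plan is to use a Vandermonde-type argument to invert the relation between the powers $L_a^i y$ and the eigenvector components $y_j$.

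First I would record the starting point: because $a$ is a left axis with $L_a$ annihilated by a polynomial with simple roots $\mu_1,\dots,\mu_m$, the operator $L_a$ is diagonalizable with eigenvalues among the $\mu_j$, so the decomposition $y=\sum_{j=1}^m y_j$ with $L_a y_j=\mu_j y_j$ exists and is unique (some $y_j$ may be zero). Applying $L_a^i$ gives the key identity
\[
L_a^i y \;=\; \sum_{j=1}^m \mu_j^{\,i}\, y_j, \qquad i=0,1,\dots,m-1.
\]
The inclusion $V_a(y)\subseteq \sum_j \ff y_j$ is then immediate from this formula.

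For the reverse inclusion, I would view the $m$ equations above as a linear system expressing the vectors $L_a^0 y,\dots, L_a^{m-1}y$ in terms of the $y_j$'s, with coefficient matrix the Vandermonde matrix $M=(\mu_j^{\,i})_{0\le i\le m-1,\,1\le j\le m}$. Since the $\mu_j$ are pairwise distinct, $\det M = \prod_{i<j}(\mu_j-\mu_i)\ne 0$, so $M$ is invertible over $\ff$. Multiplying both sides by $M^{-1}$ expresses each $y_j$ as an $\ff$-linear combination of $y, L_a y,\dots, L_a^{m-1}y$, which shows $y_j\in V_a(y)$ for every $j$, giving $\sum_j \ff y_j\subseteq V_a(y)$.

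Finally I would note that the sum $\sum_j \ff y_j$ is automatically direct: the nonzero $y_j$ lie in distinct eigenspaces of $L_a$ (for the distinct eigenvalues $\mu_j$), so they are linearly independent, and summands with $y_j=0$ contribute nothing. This yields $V_a(y)=\bigoplus_{j=1}^m \ff y_j$. There is no real obstacle here; the only point requiring a little care is bookkeeping when some components $y_j$ vanish, which is handled cleanly by the fact that a direct sum with zero summands is unaffected.
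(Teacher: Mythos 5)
Your proof is correct and follows essentially the same route as the paper's: apply $L_a^i$ to get $L_a^i y=\sum_j \mu_j^i y_j$, then invert the Vandermonde matrix $(\mu_j^i)$ (nonsingular since the $\mu_j$ are distinct) to recover each $y_j$ inside $V_a(y)$. Your added remark about directness of the sum and vanishing components is a harmless bit of extra care that the paper leaves implicit.
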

\begin{proof}
By induction, $L_a^k (y) = \sum \mu_j^k y_j$ for $0 \le k \le m-1.$
Thus $$V_a(y) \subseteq \oplus_{j=1}^m \ff y_j.$$ But on the other
hand, we have a system of $m$ equations in the $y_j$ with
coefficients $( \mu_j^k)$, the Vandermonde matrix, which is
non-singular, so there is a unique solution for the $y_j$ in
$V_a(y)$.
\end{proof}

This space $V_a(b)$ plays a key  role in investigating a second axis
$b$ (see Proposition \ref{prop V_a(b)}).

\subsection{Basic properties of flexible idempotents}\label{id}$ $
\smallskip

\noindent
An idempotent $a\in A$  is called {\it flexible} if $(ax)a = a(xa)$
and $(xa)x = x(ax)$ for all $x \in A$. We present some examples of
axes that are not flexible, to justify full generality.

\begin{examples}\label{notJJ}$ $
\begin{enumerate} \eroman
\item
Let $A = \ff a\oplus \ff b\oplus \ff c$ with multiplication given by
\[
 \begin{aligned}
&a^2 = a,  \qquad x^2 = a = y^2,\\ & ax = \gl x, \qquad xa = 0,\\
& ay = 0, \qquad ya = \gd y, \qquad xy =   yx = 0.
\end{aligned}
 \]
 Then
 $a(xa) = 0 = (ax)a$, and   $a(ya) =  0= (ay)a,$ so $L_a R_a = R_a L_a.$
\[
(L_a - \gl )x =\gl(x -x) =0 = L_a y,
\]
implying $(L_a-\gl)(L_a-1)L_a =0.$
Likewise $(R_a-\gd)(R_a-1)R_a =0.$

Hence $a$  is a primitive axis of  type $(\gl,\gd)$ which is not
of Jordan type, with $\ff x = A_{\gl,0}(a)$ and $\ff y = A_{0,\gd}(a)$; $a$ is the only
axis in $A$.

\item
Let $0 \ne \gl,\gd\in \ff,$ with $\gl+\gd = 1.$ Let
\[
A=Fa\oplus Fc\oplus Fx\oplus Fy\oplus Fz,
\]
with multiplication defined by:
\begin{gather*}
a^2=a,\quad ac=ca=0,\quad ax=\gl x, xa=0,\quad ay=0, ya=\gd y,\\
az=\gl z,\quad za=\gd z.
\end{gather*}
\begin{gather*}
c^2=cx=xc=cy=yc= cz=zc=0.\\
x^2=-c,\quad xy=0=yx,\quad xz=\gl y,\quad zx=0.\\
y^2=c,\quad yz=\gd x,\quad zy=0,\quad z^2=0.
\end{gather*}
So
\[
A_{1,1}=Fa,\quad A_{0,0}=Fc,\quad A_{\gl,0}=Fx,\quad
A_{0,\gd}=Fy,\quad A_{\gl,\gd}=Fz.
\]
It is easy to check that $(av)a=a(va),$ for $v\in\{a,c,x,y,z\},$
and $a$ is a primitive axis of type $(\gl,\gd),$  not of Jordan
type. Also, $a$ is not a flexible axis since $(xa)x=0,$ while
$x(ax)=\gl x^2=-\gl c.$

Let $b = a + c+x+y+z.$ Then
\[
\begin{aligned}  b^2 = & a^2 +x^2  +(y)^2 +ax +ya + az +za +xz + yz  \\
&= a - c+c +(\gl + \gd) x  + (\gl + \gd) y + (\gl + \gd)z= b,
\end{aligned}
\]
Note that $A$ has dimension  5. Since
$a,b,ab,ba,$ and $aba$ are independent, they span $A$.
\end{enumerate}
\end{examples}

Flexible idempotents are rather special.

\begin{lemma}\label{deg7}
If $a$ is an idempotent satisfying $(xa)x=x(ax),$ for all $x\in A,$ then $L_a ^2 -L_a =  R_a ^2 -R_a.$
\end{lemma}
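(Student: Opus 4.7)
The plan is to linearize the given quadratic-in-$x$ identity and then specialize.

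First, I would replace $x$ by $x+y$ in $(xa)x = x(ax)$ and expand both sides. Using distributivity (which holds in any algebra) the $x$-only and $y$-only terms cancel by the original identity, leaving the polarized form
\[
(xa)y + (ya)x \;=\; x(ay) + y(ax) \qquad \text{for all } x,y \in A.
\]

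Next, I would specialize $y = a$. Since $a$ is an idempotent we have $a \cdot a = a$, so the identity becomes
\[
(xa)a + ax \;=\; x(a) + a(ax),
\]
that is,
\[
R_a^2(x) + L_a(x) \;=\; R_a(x) + L_a^2(x) \qquad \text{for all } x \in A.
\]
Rearranging gives $L_a^2 - L_a = R_a^2 - R_a$ as operators on $A$.

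The only subtlety is that the given hypothesis is only the one-variable identity $(xa)x = x(ax)$; polarization by $x \mapsto x+y$ is valid because multiplication in $A$ is bilinear, so no additional associativity is needed. After that the computation is immediate from $a^2 = a$. There is no real obstacle here — the argument is a one-line polarization plus substitution.
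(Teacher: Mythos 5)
Your proof is correct and is essentially the same as the paper's: the paper substitutes $a+x$ directly into $(xa)x=x(ax)$ and expands, which is exactly your polarization followed by the specialization $y=a$ (note the trivial typo $x(a)$ for $xa$ in your display). Both arguments reduce to the identity $(xa)a+ax=a(ax)+xa$, i.e.\ $R_a^2+L_a=L_a^2+R_a$.
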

\begin{proof}
We have:
\begin{gather*}
a +ax + (xa)a + xax =  (a+xa)(a+x) = ((a+x)a)(a+x) =\\
(a+x)(a(a+x)) =  (a+x)(a+ax) =a+a(ax) + xa + xax,
\end{gather*}
so $ax + (xa)a =a(ax) + xa$.
\end{proof}

\begin{lemma}\label{acationscom20}
Suppose  $a\in A$ is an idempotent satisfying
 $ L_a^2 -    L_a =R_a^2 - R_a  .$  (In particular this holds when $a$ is an
idempotent satisfying $(xa)x=x(ax),$ for all $x\in A,$  by
 Lemma~\ref{deg7}.)
\begin{enumerate}
\item
If $A_{\mu,\nu}(a)\ne 0,$
then either $\nu =\mu$    or $\nu =1-\mu.$

\item
$A$ decomposes as $\oplus _{\gl} A_{\gl,\gl} \oplus A_{\gl,1-\gl},$
summed over all left eigenvalues $\gl$ of~$a$.
\end{enumerate}
\end{lemma}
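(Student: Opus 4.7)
The plan is to reduce both parts to one scalar identity that the hypothesis forces on every joint eigenspace of $a$.

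For (1), I would take any nonzero $x\in A_{\mu,\nu}(a)$, so that $L_a x=\mu x$ and $R_a x=\nu x$. Applying the two sides of the hypothesis $L_a^2-L_a=R_a^2-R_a$ to $x$ gives
\[
(\mu^2-\mu)\,x \;=\; (L_a^2-L_a)x \;=\; (R_a^2-R_a)x \;=\; (\nu^2-\nu)\,x,
\]
so $\mu^2-\mu=\nu^2-\nu$. This rearranges as $(\mu-\nu)(\mu+\nu-1)=0$, forcing $\nu=\mu$ or $\nu=1-\mu$.

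For (2), I would invoke the operating hypothesis that $a$ is an axis, so that $L_a$ and $R_a$ are commuting semisimple operators on $A$; hence they are simultaneously diagonalizable and
\[
A \;=\; \bigoplus_{\mu,\nu} A_{\mu,\nu}(a),
\]
summed over the pairs $(\mu,\nu)$ of left/right eigenvalues. Part (1) kills every summand with $\nu\notin\{\mu,1-\mu\}$, so the sum collapses to $\bigoplus_{\gl}\bigl(A_{\gl,\gl}\oplus A_{\gl,1-\gl}\bigr)$, as asserted.

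The whole argument rests on the one-line scalar computation and the standard fact that commuting semisimple operators are simultaneously diagonalizable, so there is no real obstacle. The only mild subtlety is reading (2) in the presence of the tacit semisimplicity of $L_a$ and $R_a$: without that input one could still conclude from (1) that $A_{\mu,\nu}(a)=0$ whenever $\nu\notin\{\mu,1-\mu\}$, but not that such joint eigenspaces exhaust $A$.
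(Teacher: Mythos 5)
Your proof is correct and follows essentially the same route as the paper: part (1) is the identical one-line scalar computation giving $(\mu^2-\mu)z=(\nu^2-\nu)z$ and hence $(\mu-\nu)(\mu+\nu-1)=0$, and part (2) is deduced from it exactly as the paper does (the paper's entire proof of (2) is ``these are the only possibilities, in view of (1)''). Your closing remark that (2) tacitly requires the simultaneous semisimplicity of $L_a$ and $R_a$ is a fair observation; the paper leaves that input implicit as well.
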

\begin{proof}
(1)\ Let $z\ne 0$ is a  $(\mu,\nu)$-eigenvector, then
\[
(\mu^2 - \mu)z = a(az) - az = (za)a-za= (\nu^2 -\nu)z,
\]
implying $\mu^2 - \mu=\nu^2 - \nu,$ or $(\mu -
\nu)(\mu + \nu - 1) = 0.$
\medskip

\noindent
(2)\ These are the only possibilities, in view of (1).
\end{proof}

\begin{lemma}\label{acationscom}
Suppose  $a\in A$ is a  flexible idempotent. Then
\[
R_a(R_a+L_a-1)=L_a(R_a+L_a-1).
\]
\end{lemma}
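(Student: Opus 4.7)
The plan is to expand both sides as polynomials in $L_a$ and $R_a$ and show they agree. We have
\begin{equation*}
R_a(R_a+L_a-1) - L_a(R_a+L_a-1) = (R_a^2 - L_a^2) + (R_a L_a - L_a R_a) - (R_a - L_a),
\end{equation*}
which I would rewrite as
\begin{equation*}
\bigl((R_a^2 - R_a) - (L_a^2 - L_a)\bigr) + (R_a L_a - L_a R_a).
\end{equation*}
So it suffices to show that each of the two parenthesized expressions is zero.

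For the first, I would invoke Lemma~\ref{deg7}: the flexibility condition $(xa)x=x(ax)$ is exactly what is needed there, and it yields $L_a^2 - L_a = R_a^2 - R_a$.

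For the second, I would unpack the other half of the flexibility hypothesis. The identity $(ax)a=a(xa)$ translates directly into operator language as $R_a L_a x = L_a R_a x$ for every $x\in A$, hence $R_a L_a = L_a R_a$.

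Combining, both summands vanish and the desired identity follows. There is no substantive obstacle here; the only thing to be careful about is keeping track of which half of the flexibility condition gives which identity (the $(ax)a=a(xa)$ half gives commutation of $L_a$ and $R_a$, while the $(xa)x=x(ax)$ half gives equality of the quadratic parts via the prior lemma).
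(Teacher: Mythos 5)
Your proof is correct and takes essentially the same route as the paper: both arguments reduce the identity to the two facts $L_a^2-L_a=R_a^2-R_a$ (Lemma~\ref{deg7}, from the $(xa)x=x(ax)$ half of flexibility) and $L_aR_a=R_aL_a$ (from the $(ax)a=a(xa)$ half). The only difference is presentational — you subtract and group, while the paper writes a chain of equalities.
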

\begin{proof}
 By Lemma~\ref{deg7}, and since $L_aR_a=R_aL_a,$
\begin{gather*}
R_a(R_a+L_a-1)=R_aL_a+R_a^2-R_a=R_aL_a+L_a^2-L_a\\
=L_a(R_a+L_a-1).\qedhere
\end{gather*}
\end{proof}

\subsection{Albert's power-associative results}$ $
\smallskip

\noindent
As an application, we reprove Albert's theorem on power-associative
algebras, but first we need:

\begin{lemma}\label{acationscomii}
If  $A$ is flexible and power-associative over a field of characteristic $\ne
2,3,$ then
\[
(X_a - 1)  Y_a (L_a + R_a -1) = 0,\text{ for }X,Y\in \{R,L\}.
\]
 \end{lemma}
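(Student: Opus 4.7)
The plan is to evaluate the operator $(X_a - 1)\,Y_a\,(L_a + R_a - 1)$ summand-by-summand on Albert's decomposition $A = A_{1,1} \oplus A_{0,0} \oplus \mathring{A}_{1/2}$, which is available from part~(i) of Albert's Fusion Theorem since $A$ is power-associative with $\operatorname{char}(F) \ne 2, 3$. The whole identity reduces to observing that $Y_a(L_a + R_a - 1)$ maps $A$ into $A_{1,1}$ and that $X_a - 1$ kills $A_{1,1}$.

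First I would describe how $L_a + R_a - 1$ acts on each piece. On $\mathring{A}_{1/2}$, the defining relation $ax + xa = x$ is exactly $(L_a + R_a - 1)x = 0$. On $A_{0,0}$ both $L_a$ and $R_a$ vanish, so $(L_a + R_a - 1)x = -x$. On $A_{1,1}$ both $L_a$ and $R_a$ act as the identity, so $(L_a + R_a - 1)x = x$.

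Next I would apply $Y_a$ with $Y \in \{L,R\}$. Since $Y_a$ annihilates $A_{0,0}$ and acts as the identity on $A_{1,1}$, the composition $Y_a(L_a + R_a - 1)$ sends the $\mathring{A}_{1/2}$ and $A_{0,0}$ components to zero and carries the $A_{1,1}$ component to itself; thus its image is contained in $A_{1,1}$. Applying $(X_a - 1)$ then gives zero on $A_{1,1}$ for $X \in \{L,R\}$, since $L_a$ and $R_a$ restrict to the identity there. Assembled over the three summands, the triple composition vanishes on all of $A$.

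I do not expect a genuine obstacle: once Albert's decomposition is in hand, the verification is a one-line check on each Peirce-like component. Flexibility enters only implicitly, having already been used to justify invoking Albert's theorem and the symmetry $L_a^2 - L_a = R_a^2 - R_a$ of Lemma~\ref{deg7} that underlies the eigenvalue analysis of Lemma~\ref{acationscom20}.
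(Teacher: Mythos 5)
Your componentwise verification is computationally correct: $(L_a+R_a-1)$ acts as $1$, $-1$, and $0$ on $A_{1,1}$, $A_{0,0}$, and $\mathring{A}_{1/2}$ respectively, $Y_a$ then kills the $A_{0,0}$ contribution and fixes $A_{1,1}$, and $(X_a-1)$ annihilates $A_{1,1}$. The problem is that the argument runs backwards relative to the logical architecture of the paper, and in context this is a genuine defect, not merely a stylistic choice. The decomposition $A=A_{1,1}\oplus A_{0,0}\oplus\mathring{A}_{1/2}$ is precisely Theorem~\ref{axi}(i), and the entire reason Lemma~\ref{acationscomii} exists is to serve as the key ingredient in the paper's new, direct proof of that decomposition: in the proof of Theorem~\ref{axi}(i), the elements $x_1',x_2',x_1'',x_2''$ are placed in $A_1$, $A_0$, and $\mathring{A}_{1/2}$ exactly by invoking Lemma~\ref{acationscomii}. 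If the lemma is proved by assuming the decomposition, the subsequent proof of Theorem~\ref{axi} becomes circular. One can escape the circle by citing Albert's original proof of the decomposition in \cite{A} as an external fact (which is how you phrase it); that does yield a valid proof of the lemma as an isolated statement, but it quietly imports the hardest part of the argument from outside and defeats the stated purpose of giving an independent derivation.

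The paper's own proof presupposes no decomposition at all: it starts from Albert's linearized degree-four identity \cite[Equation~12, p.~556]{A}, which is a consequence of power-associativity alone, substitutes $L_a^2-L_a=R_a^2-R_a$ from Lemma~\ref{deg7}, and rearranges to get $(R_a-1)^2R_a=-L_aR_a(R_a-1)$, hence $(R_a-1)R_a(L_a+R_a-1)=0$; the remaining three cases of $(X,Y)$ then follow from Lemma~\ref{deg7} and Lemma~\ref{acationscom}. A secondary warning sign in your write-up is that flexibility plays no real role in your argument (Albert's part~(i) holds for any power-associative algebra of characteristic $\ne 2,3$), which signals that you are deriving the identity from a stronger imported input rather than from the stated hypotheses. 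To repair the proposal within the paper's framework, you would need to obtain the operator identity directly from the degree-four power-associativity relation, as the paper does.
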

\begin{proof} We show that $(R_a - 1)  R_a (L_a + R_a -1) = 0,$ the rest
follows from Lemma \ref{deg7} and Lemma \ref{acationscom}.

By \cite[Equation 12, p. 556]{A}, and applying Lemma~\ref{deg7},
 \begin{equation}\begin{aligned}  0 & = R_a^2 + L_a R_a+ L_a^2 - R_a^3 -L_a R_a^2 -L_a \\
 & = R_a^2  + L_a R_a + R_a^2 - R_a^3 -L_a R_a^2 -R_a  \\ &= 2
R_a^2 - R_a^3 +L_a (R_a -R_a^2) -R_a ,\end{aligned}\end{equation}

so
\[(R_a - 1)^2 R_a =  R_a^3 - 2 R_a^2 +R_a  = L_a (R_a -R_a^2) =
- L_a R_a  (R_a -1) ,
\]
so
\[
(R_a - 1)  R_a (L_a + R_a -1) = 0.\qedhere
\]
\end{proof}

\begin{remark}\label{not06}
In  an easy argument in the first three lines of the proof of
\cite[Theorem 3, p.~560]{A} Albert proves  that
$\mathring{A}_{\gl}=A_{\gl,\gl}$ for $\gl\in\{0,1\}$ (see Equation
\eqref{eigenspaces} for notation). This is used
 in the following result  of Albert on power-associative
algebras (\cite[(22), p.~559]{A} and \cite[Thm.~3, p.~562]{A}).
\end{remark}

\begin{thm}[\cite{A}]\label{axi}
Suppose that $A$  is power-associative, and  that $\charc(\ff)\ne
2,3.$ Let $a\in A$ be an idempotent. Then
\begin{enumerate}\eroman
\item
$A=A_{1,1}\oplus A_{0,0}\oplus \mathring{A}_{1/2}.$

\item
We have
\begin{gather*}
A_1 = X_a (L_a+R_a -1)A; \qquad A_0 = (X_a-1) (L_a+R_a-1)A; \\
  \mathring{A}_{1/2} = (R_a+L_a)(R_a+L_a-2)A, \quad  X\in\{R,L\}.
    \end{gather*}
\end{enumerate}
\end{thm}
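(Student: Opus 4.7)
The plan is to analyze the operator $T := L_a + R_a$, show that it satisfies $t(t-1)(t-2) = 0$ on $A$, deduce (i) by the primary decomposition theorem, and then read off the explicit projector formulas of (ii). A preliminary remark: power-associativity in characteristic $\ne 2, 3$ supplies the two flexibility identities $(ax)a = a(xa)$ and $(xa)x = x(ax)$ for $a$, so the preceding Lemmas \ref{deg7}, \ref{acationscom}, and \ref{acationscomii} all apply with respect to $a$. In particular I may use $L_a R_a = R_a L_a$, $L_a^2 - L_a = R_a^2 - R_a$, $L_a(T-1) = R_a(T-1)$, and $(X_a - 1) Y_a (T-1) = 0$ for all $X, Y \in \{L, R\}$.

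The crucial step is $T(T-1)(T-2) = 0$. The relations $(L_a - 1) Y_a (T-1) = 0$ and $(R_a - 1) Y_a (T-1) = 0$ place the image of $Y_a(T-1)$ inside $A_1(L_a) \cap A_1(R_a) = A_{1,1}$ for each $Y \in \{L, R\}$, so
\[
T(T-1)A = L_a(T-1)A + R_a(T-1)A \subseteq A_{1,1}.
\]
Since $T$ acts as the scalar $2$ on $A_{1,1}$, I conclude $(T-2)T(T-1) = 0$ on $A$. As $\charc(\ff) \ne 2$, the roots $0, 1, 2$ are distinct, so $A = \ker T \oplus \ker(T-1) \oplus \ker(T-2)$. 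I then identify each kernel: $\ker(T-1) = \mathring{A}_{1/2}$ by definition; for $\ker(T-2)$, if $Tx = 2x$ then Lemma \ref{acationscom} gives $L_a x = L_a(T-1)x = R_a(T-1)x = R_a x$, and combined with $L_a x + R_a x = 2x$ this forces $x \in A_{1,1}$; for $\ker T$, if $Tx = 0$ then $L_a x = -R_a x$, hence $L_a^2 x = R_a^2 x$ (via $L_aR_a = R_aL_a$), and so $L_a^2 - L_a = R_a^2 - R_a$ forces $L_a x = R_a x$, giving $L_a x = R_a x = 0$. This proves (i).

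For (ii) I check each formula by computing on the three summands of (i). The operator $X_a(T-1)$ lands in $A_{1,1}$ by Lemma \ref{acationscomii}, acts as the identity there, and vanishes on $A_{0,0}$ (where $X_a$ is zero) and on $\mathring{A}_{1/2}$ (where $T-1$ is zero); hence $X_a(T-1)A = A_{1,1} = A_1$. The operator $(X_a - 1)(T-1)$ is the identity on $A_{0,0}$, vanishes on $A_{1,1}$ and on $\mathring{A}_{1/2}$, and its image is killed by both $L_a$ and $R_a$ (again by Lemma \ref{acationscomii}, combined with $L_aR_a = R_aL_a$), so it equals $A_{0,0} = A_0$. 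Finally $T(T-2)$ annihilates $A_{1,1}$ and $A_{0,0}$, acts as $-1$ on $\mathring{A}_{1/2}$, and lands inside $\ker(T-1) = \mathring{A}_{1/2}$ by the relation $T(T-1)(T-2) = 0$, giving the last formula.

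The main obstacle I anticipate is the clean step $T(T-1)A \subseteq A_{1,1}$; the correct repackaging of Lemma \ref{acationscomii} as the statement "$Y_a(T-1)$ has image in $A_{1,1}$" is the key idea that reduces everything to a polynomial identity in $T$. After that, the kernel identifications and the projector formulas become straightforward scalar checks on each of the three summands.
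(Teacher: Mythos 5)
Your argument breaks down at the preliminary remark, and the failure is not cosmetic. Power-associativity in characteristic $\ne 2,3$ does \emph{not} supply the identities $(ax)a=a(xa)$ and $(xa)x=x(ax)$: these are instances of the flexible law, which is an independent hypothesis. This is precisely why Lemma~\ref{acationscomii} and parts (iii)--(iv) of Albert's Fusion Theorem carry the assumption that $A$ is flexible \emph{in addition} to power-associative, while Theorem~\ref{axi} assumes only power-associativity. Concretely, linearizing third-power associativity $x^2x=xx^2$ at the idempotent $a$ yields only the single operator relation $(R_a^2-R_a)-(L_a^2-L_a)=L_aR_a-R_aL_a$, not the two separate identities $L_aR_a=R_aL_a$ and $L_a^2-L_a=R_a^2-R_a$ that you need. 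Every subsequent step of your proof --- the inclusion $T(T-1)A\subseteq A_{1,1}$ via Lemma~\ref{acationscomii}, and the identifications of $\ker T$ and $\ker(T-2)$ via Lemmas~\ref{deg7} and~\ref{acationscom} --- leans on these flexibility consequences, so as written you have proved the theorem only for \emph{flexible} power-associative algebras.

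The repair is the two-step reduction the paper actually performs. First treat the case where $A$ is commutative: there flexibility is automatic by \eqref{flex1}, and your computation that $T=L_a+R_a$ satisfies $T(T-1)(T-2)=0$ is then legitimate and is in fact a clean, conceptual repackaging of the paper's manipulation of $x_1',x_1'',x_2',x_2''$; the projector formulas of (ii) come out exactly as you describe. For general $A$, one must pass to the commutative power-associative algebra $\mathring{A}=A^{(+)}$ with product $\frac12(xy+yx)$, obtain $A=\mathring{A}_1\oplus\mathring{A}_0\oplus\mathring{A}_{1/2}$, and then invoke Remark~\ref{not06} (Albert) to convert $\mathring{A}_1$ and $\mathring{A}_0$ into $A_{1,1}$ and $A_{0,0}$. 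That last conversion --- from $ax+xa=2x$ to $ax=xa=x$, and from $ax+xa=0$ to $ax=xa=0$ --- is a genuine theorem of Albert's requiring its own argument when $L_a$ and $R_a$ do not commute, and your proof contains no substitute for it.
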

Here is an alternate proof that is rather conceptual and reasonably
quick, working directly in~$A.$
\begin{proof}
(i)\ Assume first that $A$ is commutative.  Then $A$ is flexible
(see Equation \eqref{flex1}), so we may use Lemma~\ref{acationscom}
and Lemma \ref{acationscomii}.  Let $x\in A.$

Let $x_1=R_ax$ and $x_2=(1-R_a)x,$ so that $x=x_1+x_2.$ By
Lemma~\ref{acationscomii}  we have
\begin{equation}\label{eq axi(1)}
(R_a-1)(R_a+L_a-1)(x_1)=0=(L_a-1)(R_a+L_a-1)(x_1).
\end{equation}
and
\begin{equation}\label{eq axi(2)}
R_a(R_a+L_a-1)(x_2)=0=L_a(R_a+L_a-1)(x_2).
\end{equation}
Let
\begin{alignat*}{3}
x_1'&=(R_a+L_a-1)(x_1), &\qquad\qquad & x_1''=(R_a+L_a-2)(x_1),\\
x_2'&=(R_a+L_a-1)(x_2), &\qquad\qquad & x_2''=(R_a+L_a)(x_2),
\end{alignat*}
Of course
\[
x=x_1'-x_1''+x_2''-x_2'.
\]
By Equations \eqref{eq axi(1)} and \eqref{eq axi(2)},
\[
R_a(x_1')=x_1'=L_a(x_1'),\qquad R_a(x_2')=0=L_a(x_2').
\]
So
\[
x_1'\in A_1,\qquad\text{and}\qquad x_2'\in A_0.
\]
Also by Equation \eqref{eq axi(1)}, $(R_a+L_a-1)(x_1'')=0,$ that is
$ x_1''\in \mathring{A}_{1/2}, $ and by Equation \eqref{eq axi(2)},
$(R_a+L_a-1)(x_2'')=0,$ that is $ x_2''\in \mathring{A}_{1/2}.$

For the general case where $A$ is not commutative, $\mathring{A}$ is
commutative and power-associative; hence
$A=\mathring{A}_1\oplus\mathring{A}_0\oplus \mathring{A}_{1/2}.$
However, $\mathring{A}_{\gl}=A_{\gl,\gl},$ for $\gl\in\{0,1\},$ by
Remark~\ref{not06}.
\medskip

\noindent (ii)\ follows from (i). Indeed
\[
X_a (L_a+R_a -1)A_0=0=X_a (L_a+R_a -1)\mathring{A}_{1/2},
\]
and $X_a (L_a+R_a -1)(x)=x,$ for $x\in A_1,$ and similarly for
$(X_a-1) (L_a+R_a -1)$ and $(X_a+L_a)(R_a+L_a-2).$
\end{proof}

Note that if $A$ is commutative then $\mathring{A}_{1/2} =A_{1/2} ,$
so Theorem~\ref{axi} generalizes the commutative description of
axes.

%
\subsection{Properties of primitive axes of type $(\gl,\gd)$}$ $
\smallskip

 Here are some computations.

\begin{lemma}\label{lem some computations}$ $
Suppose  $a$ is a primitive axis of type $(\gl,\gd),$ and let $y\in
A.$
\begin{enumerate}
\item
$a(\ff a+A_0)= \ff a .$

\item
$ay=\ga_y  a+\gl y_{\gl},$ so
$y_{\gl} =\frac 1 {\gl}(ay -\ga_y  a).$

\item
$a(ay)=\ga_y (1-\gl) a+\gl ay.$

\item $ y_0 =  y -\frac 1 \gl
 (ay -(\gl +1)\ga_y  a)  .$

 \item
$(ay)a =\ga_y  a+\gl \gd y_{\gl,\gd},$ implying
$y_{\gl,\gd}=\frac 1{\gl \gd}((ay)a -\ga_y  a).$

\end{enumerate}
\end{lemma}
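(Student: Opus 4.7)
The proof is essentially an exercise in applying the eigenspace decompositions introduced in Definition~\ref{defs main3}, so the plan is to expand each side in coordinates with respect to the appropriate decomposition of $y$ and read off the formulas.

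For item (1), given $z = ca + w$ with $c \in \ff$ and $w \in A_0$, I would simply compute $a \cdot z = c a + a w = c a$ since $aw = 0$, and observe that $a = a \cdot a$ lies in the image, giving both inclusions. For items (2), (3), and (4), the natural starting point is the primitive left decomposition $y = \ga_y a + y_0 + y_\gl$ from Definition~\ref{defs main3}(1). Applying $L_a$ kills $y_0$ and rescales $y_\gl$ by $\gl$, yielding $ay = \ga_y a + \gl y_\gl$, which is (2); applying $L_a$ a second time gives $a(ay) = \ga_y a + \gl^2 y_\gl$, and rewriting $\gl^2 y_\gl = \gl(ay - \ga_y a)$ using (2) produces the form $\ga_y(1-\gl)a + \gl\, ay$ of (3). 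Then (4) is obtained by solving $y = \ga_y a + y_0 + y_\gl$ for $y_0$ and substituting the expression for $y_\gl$ from (2).

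For item (5), the same strategy works with the two-sided decomposition from equation~\eqref{eq decomposition}: write $y = \ga_y a + y_{0,0} + y_{\gl,0} + y_{0,\gd} + y_{\gl,\gd}$. Applying $L_a$ annihilates the components in $A_{0,0}$ and $A_{0,\gd}$ and scales the others, so $ay = \ga_y a + \gl y_{\gl,0} + \gl y_{\gl,\gd}$. Now applying $R_a$ annihilates $a$'s images in $A_{\gl,0}$ (since the right eigenvalue there is $0$) and scales $y_{\gl,\gd}$ by $\gd$, giving $(ay)a = \ga_y a + \gl\gd y_{\gl,\gd}$, which rearranges to the stated formula for $y_{\gl,\gd}$. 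Throughout, the commutation $L_a R_a = R_a L_a$ guaranteed by the axis hypothesis ensures that these successive applications of $L_a$ and $R_a$ respect the joint eigenspace decomposition.

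There is no genuine obstacle here; the only mild subtlety is bookkeeping the $\ga_y a$ contribution when solving for $y_0$ in (4), and confirming for (5) that $y_{\gl,0}$ does in fact lie in the kernel of $R_a$ (which is immediate from the definition of $A_{\gl,0}(a) = A_\gl(L_a) \cap A_0(R_a)$). Everything else reduces to linear algebra in the graded decomposition attached to the axis.
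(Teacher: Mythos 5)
Your proposal is correct and follows essentially the same route as the paper's proof: expand $y$ in the left (resp.\ two-sided) decomposition, apply $L_a$ (and then $R_a$ for item (5)), and solve for the components. The paper's own proof is just a terser version of the same computation.
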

\begin{proof}
(1) This is obvious.

(2) Apply $L_a$ and solve.

(3) $ a(ay)=\ga_y  a+\gl^2 y_{\gl}=\ga_y  a+\gl \textstyle{(ay-\ga_y
a)},$  yielding (3).

 (4) $y_0 = y - y_\gl - \ga_y  a = y -\frac
1 \gl
 (ay -(\gl +1)\ga_y  a).$

 (5) Apply $R_a$ to (2).
\end{proof}

\begin{lemma}\label{lem 1.660}$ $
\begin{enumerate}\eroman
\item $ab \notin \ff a$ or $ab =0$, for any   idempotent $a$ and  primitive right axis $b$
satisfying the involutory fusion rules.

\item For any primitive left axis $a$ of type
$\gl$ and any primitive right axis~$b$ satisfying the involutory
fusion rules, either $ab =0$ or $b_{\gl}\ne 0$.

\item If $a$ is a primitive axis of Jordan type $({\gl,\gd})$ with $\gl\ne \gd,$
and $b$ is an  axis, then $b_{\gl,\gd}^2 = 0.$
\end{enumerate}
\end{lemma}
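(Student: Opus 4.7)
For (i), the plan is to expand $a$ in the right-decomposition with respect to $b$: write $a = \beta b + {}_0 a + \sum_i {}_{\delta_i} a$, where the $\delta_i$ are the nontrivial right eigenvalues of $b$. Applying $R_b$ yields $ab = \beta b + \sum_i \delta_i\, {}_{\delta_i} a$, and matching components of $ab = \alpha a$ in the direct sum forces $(\alpha - 1)\beta = 0$, $\alpha \cdot {}_0 a = 0$, and $(\alpha - \delta_i)\, {}_{\delta_i} a = 0$ for each $i$. A short case analysis shows that if $\alpha \ne 0, 1$, then $\beta = 0$, ${}_0 a = 0$, and $a$ lies in a single eigenspace $A_{\delta_j}(R_b)$ with $\alpha = \delta_j$; then $a = a^2 \in A_{\delta_j}(R_b)^2 \subseteq \ff b + A_0(R_b)$ by the right involutory fusion rule, contradicting the direct sum decomposition, so $\alpha = 0$ and $ab = 0$ (the remaining case $\alpha = 1$ forces $a = b$ and is the degenerate situation tacitly excluded). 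Part (ii) is then immediate: if $b_\gl = 0$, then $b = \alpha_b a + b_0$ in the left-decomposition, so $ab = L_a(b) = \alpha_b a \in \ff a$, and (i) yields $ab = 0$.

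For (iii), I would set $c := b_{\gl,\gd}$ and use the Jordan-type decomposition $b = \alpha_b a + b_0 + c$ with $b_0 \in A_{0,0}$. The axis hypothesis $L_b R_b = R_b L_b$ applied to $x = a$ yields the identity $b(ab) = (ba)b$. Using Lemma~\ref{lem some computations} to compute $ab = \alpha_b a + \gl c$, $ba = \alpha_b a + \gd c$, $bc = \alpha_b \gl c + b_0 c + c^2$, and $cb = \alpha_b \gd c + c b_0 + c^2$, a direct expansion reduces the identity to
\[
\alpha_b(\gd - \gl)(1 - \gl - \gd)\, c + \gl\, b_0 c - \gd\, c b_0 + (\gl - \gd)\, c^2 = 0.
\]
The first three summands lie in $A_{\gl,\gd}$ by the basic fusion rule $B A_\gl \subseteq A_\gl$, while $c^2 \in \ff a + A_{0,0}$ by the left involutory fusion rule applied to $c \in A_\gl(L_a)$ (noting $A_0(L_a) = A_{0,0}$ for Jordan type). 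Projecting the displayed equation onto $\ff a \oplus A_{0,0}$ isolates $(\gl - \gd)c^2 = 0$, whence $c^2 = 0$ since $\gl \ne \gd$.

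The main subtlety in (iii) is the choice of identity: one is tempted to invoke flexibility in the form $(ca)c = c(ac)$ to obtain $(\gd - \gl)c^2 = 0$ at once, but neither $A$ nor the axis $a$ need be flexible (cf.\ Examples~\ref{notJJ}(ii)). The correct substitute is $L_b R_b = R_b L_b$, which is automatic from the axis hypothesis on $b$, applied to the particular element $x = a$.
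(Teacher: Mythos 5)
Your proof is correct. Parts (i) and (ii) follow essentially the paper's route: the decisive step in both versions is that $ab=\alpha a$ with $\alpha\ne0,1$ places $a$ in a nontrivial eigenspace of $R_b$, whence $a=a^2\in\ff b+A_0(R_b)$ by the right involutory fusion rule, which is incompatible with the eigenspace decomposition. The paper gets there a little more directly (it does not write out the full right decomposition of $a$, merely observes that $ab=\gc a$ makes $a$ an eigenvector of $R_b$), and both treatments tacitly exclude the degenerate case $a=b$, where the statement as written actually fails. For part (iii) the paper offers no argument, deferring entirely to \cite[Lemma 2.12(iii)]{RoSe}, so your self-contained derivation is a genuine addition, and it checks out: $L_bR_b=R_bL_b$ evaluated at $a$ is precisely the identity $(ba)b=b(ab)$; the coefficient $(\gd-\gl)(1-\gl-\gd)=(\gl^2-\gl)-(\gd^2-\gd)$ is right; the terms $c$, $b_0c$, $cb_0$ do lie in $A_{\gl,\gd}$ by the left and right basic fusion rules, while $c^2\in\ff a+A_{0,0}$ by the involutory rule together with the Jordan-type hypothesis $A_{0,\gd}=0$; and projecting onto $\ff a\oplus A_{0,0}$ isolates $(\gl-\gd)c^2=0$ as claimed. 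Your closing remark that one cannot shortcut via flexibility of $a$ (which would give $(\gd-\gl)c^2=0$ in one line but is not among the hypotheses) is also well taken.
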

\begin{proof}
(i) If $ab = \gc a,$ then $a$ is an eigenvector of $R_b,$ with
eigenvalue $\gc.$  If $\gc=1,$ then $b$ is not primitive, a
contradiction.  Otherwise, by the fusion rules for $b,$ $a=a^2\in\ff
b+A_0(R_b).$  But then $ab\in\ff b.$ This together with $ab\in\ff a$
forces $ab=0.$

(ii) If $b_{\gl}= 0,$ then $ab = \ga_b a,$ so we are done by (i).

(iii)   The proof is exactly as in \cite[Lemma 2.12(iii)]{RoSe}.
\end{proof}

\begin{cor}\label{Jt}$ $

\begin{enumerate}\eroman
\item A primitive axis  $a$ of type  $(\gl,\gd)$ is in the center of $A$, iff it is of Jordan type
with $A_{\gl,\gd }=0.$

\item
Let $a\ne b$ be two primitive axes  with $ab=ba$ and $a$ of type  $(\gl,\gd)$.
Then either $ab=0,$ or $\gl = \gd.$ In particular, if $a$ is of
Jordan type, then either $a$ commutes with all elements of $A$ or $ab=0$.
\end{enumerate}
\end{cor}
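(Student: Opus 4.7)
The plan is to reduce both parts to coordinate calculations in the two-sided decomposition \eqref{eq decomposition} relative to $a$. For (i), I would test the centrality $ax = xa$ componentwise on $A_{1,1} \oplus A_{0,0} \oplus A_{\gl,0} \oplus A_{0,\gd} \oplus A_{\gl,\gd}$: a nonzero $x \in A_{\gl,0}$ would give $\gl x = ax = xa = 0$, contradicting $\gl \ne 0$, so $A_{\gl,0} = 0$; symmetrically $A_{0,\gd} = 0$, so $a$ is already of Jordan type. On $A_{\gl,\gd}$ the equation $\gl x = \gd x$ then forces $A_{\gl,\gd} = 0$. Conversely, if $a$ is Jordan with $A_{\gl,\gd} = 0$ then $A = \ff a \oplus A_{0,0}$, and $ax = \ga_x a = xa$ for every $x$, so $a \in Z(A)$ is immediate.

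For (ii), I would expand $b = \ga_b a + b_{0,0} + b_{\gl,0} + b_{0,\gd} + b_{\gl,\gd}$ via \eqref{eq decomposition} and compute
\[
ab - ba \;=\; \gl\, b_{\gl,0} \,-\, \gd\, b_{0,\gd} \,+\, (\gl - \gd)\, b_{\gl,\gd}.
\]
Directness of the sum together with $\gl, \gd \ne 0$ forces $b_{\gl,0} = b_{0,\gd} = 0$ and $(\gl - \gd)\, b_{\gl,\gd} = 0$. So either $\gl = \gd$, or $b_{\gl,\gd} = 0$; in the latter case $ab = \ga_b a \in \ff a$, and Lemma~\ref{lem 1.660}(i), applied to the primitive right axis $b$, upgrades this to $ab = 0$.

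The ``In particular'' clause then combines these observations. With $a$ of Jordan type one has $A = \ff a \oplus A_{0,0} \oplus A_{\gl,\gd}$; if $\gl \ne \gd$, part (ii) directly yields $ab = 0$, while if $\gl = \gd$, a componentwise check (using $ax = \gl x = xa$ on $A_{\gl,\gl}$) places $a$ in $Z(A)$, so $a$ commutes with every element including $b$. I expect the main technical point to be mere bookkeeping in the decomposition; the one subtlety is the degenerate case $\gl = \gd$ in (i), which is harmless because Jordan type together with $\gl = \gd$ already forces centrality without needing $A_{\gl,\gd} = 0$ separately.
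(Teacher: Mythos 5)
Part (ii) of your proposal is correct and is essentially the paper's own argument: compare the two decompositions of $ab=ba$ with respect to $a$, conclude $b_{\gl,0}=b_{0,\gd}=0$ and $(\gl-\gd)b_{\gl,\gd}=0$, and finish with Lemma~\ref{lem 1.660}(i) when $b_{\gl,\gd}=0$.

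Part (i), however, has a genuine gap, and your own closing remark is the symptom. You use only the commutation half of centrality, $ax=xa$, which indeed kills $A_{\gl,0}$ and $A_{0,\gd}$ but on $A_{\gl,\gd}$ only yields $(\gl-\gd)x=0$; when $\gl=\gd$ this proves nothing, so the forward implication is not established. The paper's center is the standard one for non-associative algebras (central elements commute \emph{and} associate), and its forward direction uses exactly the associator condition: $a(ax)=(aa)x=ax$, which for a $\gl$-eigenvector gives $\gl^2=\gl$ and hence $x_{\gl}=0$ for every $\gl\notin\{0,1\}$ --- this is what eliminates $A_{\gl,\gd}$ uniformly, including the case $\gl=\gd$. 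Note that if ``center'' meant mere commutation, the ``only if'' direction would actually be \emph{false}: a commutative algebra with an axis of Jordan type $(1/2,1/2)$ and $A_{1/2}\ne 0$ has $a$ commuting with everything yet $A_{\gl,\gd}\ne 0$, which is precisely the situation your final sentence describes as ``harmless.'' Correspondingly, your converse is incomplete: after showing $ax=\ga_x a=xa$ you must also verify the associator identities, as the paper does by computing $xy=x_{0,0}y_{0,0}+\ga_x\ga_y a$ and $a(xy)=(ax)y=\ga_x\ga_y a$. Fixing (i) amounts to replacing the componentwise commutation test by the computation $a(ax)=ax$ and $(xa)a=xa$, and adding the associator check to the converse.
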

\begin{proof}
(i) $(\Rightarrow)$ For any $x\in A$,
\[
\ga_x a+\gl^2 x_\gl  = a(ax) = ax =\ga_x a+ \gl x_\gl,
\]
implying $x_\gl = 0$ since $\gl \ne 0,1.$ Hence $x_{\gl,0}=0 =
x_{\gl,\gd},$ and by symmetry $x_{0,\gd}=0.$

$(\Leftarrow)$ $x = \ga _x a+x_{0,0}$ for any $x$ in $A$, implying
$ax = xa = \ga_x a$ and also $xy = x_{0,0}y_{0,0} + \ga _x \ga _y
a,$ and thus $a(xy) = (ax)y = \ga _x a y = \ga _x \ga_y a$ for all
$x,y$.%

(ii) Let $b=\ga_b a+b_{0,0}
+b_{\gl,0} +b_{0,\gd}+b_{\gl,\gd}$ be the decomposition of  $b$ with
respect to~$a$. Then $\ga_b a +\gl b_{\gl,0}+\gl b_{\gl,\gd}
=ab=ba=\ga_b a +\gd b_{0,\gd}+\gd b_{\gl,\gd}$ implying
$b_{\gl,0}=b_{0,\gd}=0.$ If $b_{\gl,\gd}\ne 0,$ then $\gl =\gd$, and
we are done. Hence $ b_{\gl,\gd}=0$, in which case $ab \in \ff a,$ so
by Lemma \ref{lem 1.660}(i), $ab =0.$
\end{proof}
\smallskip

\noindent
We generalize Seress' Lemma from  \cite[Lemma 2.7]{RoSe}:

\begin{lemma}[General Seress' Lemma]\label{lem seress}
If $a$ is a primitive left axis  satisfying the left basic fusion rules
then $a(xy)=(ax)y+a(x_0y),$   for any $x \in A$ and $y \in \ff a +A_0(L_a).$
In particular, $a(xy) \in (ax)y + \ff a.$ Here
\[
x=\ga_xa+x_0+\sum_{i=1}^mx_{\gl_i},
\]
is the left decomposition of $x$ with respect to $a.$

The symmetric statement holds:
If $a$ is a primitive right axis  satisfying the right basic fusion rules
then $(yx)a=y(xa)+a(y\cdot {}_0x),$   for any $x \in A$ and $y \in \ff a +A_0(R_a).$
In particular, $(yx)a \in y(xa) + \ff a.$
\end{lemma}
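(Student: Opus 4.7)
The plan is to unwind both sides of the identity using the left decomposition of $x$ with respect to $a$, combined with the fact that $y \in B := \ff a + A_0(L_a)$ collapses $ay$. Writing $y = \alpha_y a + y_0$ with $y_0 \in A_0(L_a)$, one immediately has $ay = \alpha_y a$, since $ay_0 = 0$; this is the single input that makes the identity go through.

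With this in hand I would expand
\[
xy \;=\; \alpha_x(ay) + x_0 y + \sum_{i=1}^m x_{\gl_i}y
\]
and apply $L_a$ termwise. Clause (i) of the left basic fusion rule says $B$ is a subalgebra, so $x_0 y \in B$; clause (ii) says $x_{\gl_i} y \in A_{\gl_i} B \subseteq A_{\gl_i}$, so each $x_{\gl_i} y$ remains a $\gl_i$-eigenvector of $L_a$. Therefore
\[
a(xy) \;=\; \alpha_x \alpha_y a + a(x_0 y) + \sum_{i=1}^m \gl_i\, x_{\gl_i}y ,
\]
and comparing with $(ax)y = \alpha_x(ay) + \sum_i \gl_i x_{\gl_i} y = \alpha_x \alpha_y a + \sum_i \gl_i x_{\gl_i}y$ yields the stated equality $a(xy) = (ax)y + a(x_0 y)$.

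For the ``in particular'' clause, $x_0 y \in B = \ff a + A_0(L_a)$ by (i), and $L_a$ sends $B$ into $\ff a$ (it kills $A_0(L_a)$ and fixes $\ff a$), so $a(x_0 y) \in \ff a$, whence $a(xy) \in (ax)y + \ff a$. The symmetric right-axis assertion is obtained by running the same argument with $R_a$ in place of $L_a$, the right decomposition of $x$ in place of the left, and the right basic fusion rules in place of the left ones.

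I do not expect a substantive obstacle; the only care needed is matching the two clauses of the fusion rule to the correct summands of the decomposition of $x$. Clause (i) absorbs the $B$-component $x_0$ into $B$ after multiplication by $y \in B$, producing the extra term $a(x_0 y)$; clause (ii) keeps each eigenspace component $x_{\gl_i}$ inside the same eigenspace $A_{\gl_i}$, so that $L_a$ simply multiplies by $\gl_i$ and these contributions reassemble into $(ax)y$.
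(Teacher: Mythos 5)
Your proof is correct and follows essentially the same route as the paper's: decompose $x$ into its $L_a$-eigencomponents, use fusion rule (ii) to keep each $x_{\gl_i}y$ in $A_{\gl_i}$ (so these terms reassemble into $(ax)y$), and use fusion rule (i) to place $x_0y$ in $\ff a+A_0(L_a)$ so that $a(x_0y)\in\ff a$. The paper phrases this as a check on each eigenvector separately "by linearity," whereas you write out the full decomposition at once, but the content is identical.
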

\begin{proof}
By linearity we need only check for $x$ an eigenvector of $L_a.$
If $x = x_0 \in A_0(L_a),$ then $a(x_0y)=0+a(x_0y)=(ax_0)y+a(x_0y).$
By the basic fusion rules $x_0y\in \ff a+A_0(L_a),$ so $a(x_0y)\in\ff a.$

If $x=x_{\mu} \in A_{\mu}(L_a)$ for $\mu \ne \{0,1\}$ then $x_{\mu}y\in A_{\mu}(L_a),$
by the basic fusion rules, hence $(ax_{\mu})y=a(x_{\mu}y)=\mu x_{\mu}y.$
\end{proof}

\section {Miyamoto involutions}

It is easy to check
that any $\mathbb Z_2$-grading of $A$ induces an
{\it involution}, i.e.~an
automorphism of order $2$ of $A.$  Indeed, if $A=A^+\oplus A^-,$
then $y\mapsto y^+-y^-$ is such an automorphism, where $y\in A$ and
$y=y^+ +y^-$ for $ y^+\in A^+, y^-\in A^-.$

So if $a\in A$ is a primitive axis of type $(\gl,\gd),$ then  we
have three such automorphisms of order 2, which, conforming with the literature, we call the {\bf
Miyamoto involutions} {\it associated with $a$}:

\begin{enumerate}\eroman
\item
$\gt_{\gl,a}=\gt_{\gl}: y\mapsto y-2y_{\gl}=\ga_y a+y_{0,0}+y_{0,\gd}-y_{\gl,0}-y_{\gl,\gd}.$

\item
$\gt_{\gd,a}=\gt_{\gd}: y\mapsto y-2\, {}_{\gd}y=\ga_y a+y_{0,0}-y_{0,\gd}+y_{\gl,0}-y_{\gl,\gd}.$

\item
$\gt_{\diag,a}=\gt_{\diag}: y\mapsto \ga_y a+y_{0,0}-y_{0,\gd}-y_{\gl,0}+y_{\gl,\gd}.$
\end{enumerate}
In case $a$ is of Jordan type, the first two Miyamoto involutions
are the same, and the third is the identity, so we are left with a
single non-trivial Miyamoto involution associated to~$a,$ which
we write as $\tau_a.$
\medskip

\begin{notation}\label{not G(X)}
For any Miyamoto involution $\tau$ we write $y^\gt$ for $\gt(y).$
Let $X$ be a set of axes, where each $x\in X$ is primitive of type $(\gl_x,\gd_x).$
We denote by  $G(X)$ the subgroup of $\Aut (A)$ generated by all the
Miyamoto involutions associated with all the axes in~$X.$
We denote $\widebar{X}:=X^{G(X)}=\{x^g\mid x\in X,\ g\in G(X)\}.$
\end{notation}

\begin{remark}\label{Miyfhol}
Let $a$ be a primitive axis of type $(\gl,\gd).$ Of course we can recover:

\begin{enumerate}\eroman
\item
$y_{\gl,\gd}+ y_{\gl,0}= \half (y-y^{\gt_\gl}).$

\item
$y_{\gl,\gd}+ y_{0,\gd}= \half (y-y^{\gt_\gd}).$

\item
$y_{0,\gd}+ y_{\gl,0}=\half(y-y^{\gt_{\diag}}),$
\end{enumerate}
so each of  $ y_{\gl,\gd}, y_{0,\gd},$ and $ y_{\gl,0}$ are linear
combinations of    $y$, $y^{\gt_{\gl}}$, $y^{\gt_{\gd}},$ and
$y^{\gt_{\diag}}$, as is $ +\ga _y a= y - ( y_{\gl,\gd}+
y_{0,\gd}+ y_{\gl,0}).$\hfill$\Diamond$
\end{remark}

\begin{thm}\label{thm KMt}
Suppose $A$ is generated by a set of axes $X,$
where each $x\in X$ is primitive of type $(\gl_x,\gd_x).$
\begin{enumerate}\eroman
\item
(Generalizing \cite[Corollary 1.2, p.~81]{HRS}.)
$A$ is spanned by the set $\widebar{X}$.

\item
Let $V\subseteq A$ be
a subspace containing $X$ such that $xV\subseteq V$ and $Vx\subseteq
V,$ for all $x\in X.$ Then $V=A.$
\end{enumerate}
\end{thm}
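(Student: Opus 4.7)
The plan is to establish (i) first and deduce (ii) from it. For (i), let $U := \mathrm{span}_\ff(\widebar{X})$. I want to show $U$ is a subalgebra of $A$; since $X \subseteq U$ and $A$ is generated by $X$ as an algebra, this forces $U = A$. By linearity, the required closure $U\cdot U \subseteq U$ reduces to showing $uv \in U$ whenever $u,v \in \widebar{X}$.

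The concrete computation uses the decomposition of $u$ with respect to $v$. If $v$ has type $(\gl_v, \gd_v)$, then applying $R_v$ to the decomposition
\[
u = \alpha_u v + u_{0,0} + u_{\gl_v,0} + u_{0,\gd_v} + u_{\gl_v,\gd_v}
\]
gives $uv = \alpha_u v + \gd_v(u_{0,\gd_v} + u_{\gl_v,\gd_v})$, and by Remark \ref{Miyfhol}(ii) the combination $u_{0,\gd_v} + u_{\gl_v,\gd_v} = \tfrac{1}{2}(u - u^{\tau_{\gd,v}})$. For this to lie in $U$ I need $\tau_{\gd,v} \in G(X)$. Writing $v = x^g$ with $x \in X$ and $g \in G(X)$, since $g$ is an automorphism one checks $A_{\mu,\nu}(v) = g(A_{\mu,\nu}(x))$, and from this it follows that $\tau_{\gl,v} = g\,\tau_{\gl,x}\,g^{-1}$ (and similarly for $\tau_{\gd,v}$, $\tau_{\diag,v}$) lies in $G(X)$. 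Thus $u^{\tau_{\gd,v}} \in \widebar{X} \subseteq U$, giving $uv \in U$; symmetrically $vu \in U$. This establishes (i).

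For (ii), the key observation is that since $L_x, R_x$ commute and are each semisimple with explicit simple eigenvalues, the joint-eigenspace projections $P_{\mu,\nu}$ onto $A_{\mu,\nu}(x)$ are products of Lagrange interpolation polynomials in $L_x$ and $R_x$, and so are polynomials in $L_x, R_x$. Hence each Miyamoto involution associated with $x$ — being of the form $I - 2(P_{\gl,0} + P_{\gl,\gd})$, or $I - 2(P_{0,\gd} + P_{\gl,\gd})$, or $I - 2(P_{0,\gd} + P_{\gl,0})$ — is itself a polynomial in $L_x$ and $R_x$. Consequently any subspace $V$ invariant under $L_x$ and $R_x$ for all $x \in X$ is automatically invariant under every Miyamoto involution arising from $X$, hence $G(X)$-invariant. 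Combined with $X \subseteq V$, this gives $\widebar{X} \subseteq V$, and (i) then yields $V \supseteq \mathrm{span}(\widebar{X}) = A$.

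The main obstacle I anticipate is the careful verification that Miyamoto involutions associated with axes in $\widebar{X}$ in fact lie in $G(X)$, i.e., the functoriality identity $\tau_{\mu, x^g} = g\,\tau_{\mu,x}\,g^{-1}$, which rests on the fact that automorphisms carry joint eigenspaces of $(L_x, R_x)$ to the corresponding joint eigenspaces of $(L_{x^g}, R_{x^g})$. Once this is in hand, both parts reduce to the eigenspace identities already codified in Remark \ref{Miyfhol}.
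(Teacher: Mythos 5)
Your proof is correct and follows essentially the same route as the paper: part (i) reduces to showing $uv\in\mathrm{span}(\widebar{X})$ for $u,v\in\widebar{X}$ via the eigenvector decomposition and Remark~\ref{Miyfhol}, and part (ii) reduces to showing that $V$ is invariant under the Miyamoto involutions and hence contains $\widebar{X}$. Your explicit check that $\tau_{\mu,x^g}=g\,\tau_{\mu,x}\,g^{-1}\in G(X)$ --- needed so that $\widebar{X}$ is closed under the involutions of its own members --- makes precise a point the paper leaves implicit, and your observation that the joint eigenprojections (hence the involutions) are polynomials in $L_x,R_x$ is a clean repackaging of the paper's direct computation with $xv$, $vx$, $xvx$.
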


\begin{proof}
(i) By induction on the length of a monomial in the axes $\widebar{X},$
it suffices to show that  $ab$ is in the span of $\widebar{X},$ for
$a,b\in \widebar{X}.$ Write
\[
b=\ga_b a+b_{0,0}+b_{\gl,0}+b_{0,\gl}+b_{\gl,\gd}.
\]
Then $ab=\ga_b a+\gl b_{\gl,0}+\gl b_{\gl,\gd}.$  But by Remark
\ref{Miyfhol}, $b_{\gl,0}$  and  $b_{\gl,\gd}$ are linear
combinations of $b, b^{\gt_{\gl}}, b^{\gt_{\gd}}, b^{\gt_{\diag}}.$

(ii) Let $a\in X,$ so that  $aV\subseteq V$ and $Va\subseteq V.$ We claim
that if $v\in V,$ then  $v_{\mu,\nu}\in V$ for each $\mu\in\{\gl,0\},\nu\in\{\gd,0\}.$
First, $v_{\gl,\gd}\in V$ since $\ga _v a +\gl\gd v_{\gl,\gd} =ava
=a(va) \in V.$ Hence $v_{\gl,0}\in V$ since $av - ava\in V,$ and
$v_{0,\gd}\in V$ since $va -ava \in V.$ Finally $v_{0,0} = v -\ga_va
-v_{0,\gd} -v_{\gl,0}- v_{\gl,\gd}\in V.$

Next we claim that $V^{\gt}=V,$ for all $\gt\in\{\gt_{\gl,a},
\gt_{\gd,a}, \gt_{\diag,a}\},$ and all $a\in X.$  Indeed, for $v\in
V,$ $v^{\gt_{\gl}}=v-2(v_{\gl,0}+v_{\gl,\gd}),$
$v^{\gt_{\gd}}=v-2(v_{0,\gl}+v_{\gl,\gd}),$ and
$v^{\gt_{\diag}}=v-2(v_{\gl,0}+v_{0,\gd}),$  Since all $v_{\mu,\nu}
\in V,$ we see that our claim holds.

Hence, in the notation of (i),  $V$ contains $\widebar{X}$, whose span
is $A$.\qedhere
\end{proof}

\section{Algebras generated by $2$ primitive axes
of type $(\gl,\gd)$ and $(\gl',\gd')$}

In this section,  $A$ is  an algebra generated by
$2$ primitive axes $a$ of type $(\gl,\gd)$ and $b$ of type
$(\gl',\gd')$.
When $a$ and $b$ have Jordan type, we showed in \cite{RoSe}
that $\dim(A)\le 3,$ and, furthermore,
we classified the possible algebras $A$ (see \cite[Theorem B]{RoSe}).
In \cite[Theorem A]{RoSe} we classified all algebras $A$ with $\dim(A)=2.$
Here we continue the classification of the possible algebras $A.$
Throughout this section we let
\[
\gs:=ab-\gd'a-\gl b\qquad\text{and}\qquad\gs':=ba-\gl' a-\gd b.
\]
We write
\[
b=\ga_b a+b_{0,0}+b_{0,\gd}+b_{\gl,0}+b_{\gl,\gd}\quad\text{and}\quad a=\ga_a b+a_{0,0}+a_{0,\gd'}+a_{\gl',0}+a_{\gl',\gd'}
\]
For the decomposition of $b$ (respectively $a$) with respect to $a$
(resp.~$b$), as in Equation \eqref{eq decomposition}.

\begin{lemma}\label{lem gs}
\begin{enumerate}\eroman
\item
We have
\[
\gs =\gc a-\gl (b_{0,\gd}+b_{0,0})=\gc' b-\gd'(a_{0,0}+a_{\gl',0}).
\]
where $\gc=\ga_b(1-\gl)-\gd',$ and $\gc'=\ga_a(1-\gd')-\gl.$ So
\[
\gs\in (\ff a+A_0(L_a)) \cap (\ff b+A_0(R_b)),
\]

\item
$a \gs = \gc a$ and $\gs b = \gc' b.$

\item
$\gs' a \in \ff a$ and $b \gs'  \in\ff b.$
\end{enumerate}
\end{lemma}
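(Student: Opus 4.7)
The plan is to prove each claim by direct substitution using the two decompositions of $b$ with respect to $a$ and of $a$ with respect to $b$. No deep result is needed; the argument is a bookkeeping exercise with the $\zz_2\times\zz_2$-grading.

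For (i), first I would compute $ab$ from the $a$-decomposition of $b$. Since $L_a$ annihilates $b_{0,0}$ and $b_{0,\gd}$, we get $ab = \ga_b a + \gl(b_{\gl,0} + b_{\gl,\gd})$. Using $b_{\gl,0} + b_{\gl,\gd} = b - \ga_b a - b_{0,0} - b_{0,\gd}$, substitution yields
\[
ab = \ga_b a + \gl b - \gl\ga_b a - \gl(b_{0,0} + b_{0,\gd}).
\]
Subtracting $\gd' a + \gl b$ gives $\gs = (\ga_b(1-\gl)-\gd')a - \gl(b_{0,0}+b_{0,\gd}) = \gc a - \gl(b_{0,0}+b_{0,\gd})$, proving the first equality. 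The second equality follows by the symmetric computation: apply $R_b$ to the $b$-decomposition of $a$ to get $ab = \ga_a b + \gd'(a_{0,\gd'} + a_{\gl',\gd'})$, then rewrite $a_{0,\gd'}+a_{\gl',\gd'} = a - \ga_a b - a_{0,0} - a_{\gl',0}$ and subtract $\gd' a + \gl b$. Since $b_{0,0}, b_{0,\gd} \in A_0(L_a)$ and $a_{0,0}, a_{\gl',0} \in A_0(R_b)$, the stated containment $\gs \in (\ff a + A_0(L_a)) \cap (\ff b + A_0(R_b))$ is immediate.

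For (ii), apply $L_a$ to the first expression in (i): $L_a(\gc a) = \gc a$ and $L_a$ kills the $A_0(L_a)$-component, so $a\gs = \gc a$. Apply $R_b$ to the second expression: $R_b(\gc' b) = \gc' b$ and $R_b$ kills the $A_0(R_b)$-component, giving $\gs b = \gc' b$.

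For (iii), I would repeat the computation of (i) with the roles of left and right interchanged. From the $a$-decomposition of $b$, $R_a$ yields $ba = \ga_b a + \gd(b_{0,\gd} + b_{\gl,\gd})$, and substituting $b_{0,\gd}+b_{\gl,\gd} = b - \ga_b a - b_{0,0} - b_{\gl,0}$ gives $\gs' = (\ga_b(1-\gd)-\gl')a - \gd(b_{0,0}+b_{\gl,0})$. Since $b_{0,0}, b_{\gl,0} \in A_0(R_a)$, applying $R_a$ gives $\gs' a \in \ff a$. Symmetrically, starting from the $b$-decomposition of $a$, $L_b(a) = \ga_a b + \lambda'(a_{\lambda',0}+a_{\lambda',\delta'})$, rewriting the parenthesis and forming $\gs'$ produces $\gs' = (\ga_a(1-\gl')-\gd)b - \gl'(a_{0,0}+a_{0,\gd'})$, and since $a_{0,0}, a_{0,\gd'} \in A_0(L_b)$, applying $L_b$ yields $b\gs' \in \ff b$.

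There is no real obstacle here — the whole proof is direct manipulation of eigenspace decompositions, and the main thing to keep straight is which eigenspaces are annihilated by which of $L_a, R_a, L_b, R_b$. Writing out all four parallel computations neatly (two for $\gs$ and two for $\gs'$) is the only mild bookkeeping concern.
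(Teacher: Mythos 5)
Your proposal is correct and follows essentially the same route as the paper: both proofs are the direct computation of $\gs$ (and $\gs'$) by applying $L_a$, $R_a$, $L_b$, $R_b$ to the two eigenspace decompositions and regrouping terms, with (ii) and (iii) then falling out because the leftover components lie in the appropriate $0$-eigenspaces. The eigenvalue bookkeeping (which index of $x_{\mu,\nu}$ corresponds to which of $L$ and $R$) is handled correctly throughout.
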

\begin{proof}
(i)
We compute that
\[
\begin{aligned}
\gs =&(\ga_ba+\gl (b_{\gl,0}+b_{\gl,\gd}))-\gd' a-\gl\ga_ba
-\gl(b_{0,0}+b_{0,\gd})-\gl (b_{\gl,0}+b_{\gl,\gd})\\
&=(\ga_b-\gl\ga_b-\gd')a-\gl (b_{0,0}+b_{0,\gd}).
\end{aligned}
\]
and
\[
\begin{aligned}
\gs&=\ga_a b+\gd'(a_{0,\gd'}+a_{\gl',\gd'})-\gd'\big(\ga_ab+(a_{0,0}+a_{\gl',0})+(a_{0,\gd'}+a_{\gl',\gd'})\big)-\gl b\\
&=(\ga_a-\gd'\ga_a-\gl)b-\gd'(a_{0,0}+a_{\gl',0}).
\end{aligned}
\]

 \medskip

\noindent
(ii) This is obvious.
\medskip

\noindent
(iii) Reverse $a$ and $b$ in (ii).
\end{proof}

\begin{prop}\label{prop 2-gen}$ $
\begin{enumerate}\eroman

\item  $a(ax)  \in \ff a  + \ff ax,$ for all $x\in A.$

\item $(xb)b\in \ff b + \ff xb,$ for all $x\in A.$

\item
$(ab)(ab) - a(bab) \in V := \ff a + \ff b + \ff ab + \ff ba.$

\item
$a(bab), (bab)a, a(bab)a\in V':=\ff a + \ff b + \ff ab + \ff ba+\ff aba.$

\item
Let $V'$ be as in (iv).  Then $aV'\subseteq V'\supseteq V'a.$

\item
$bab\in V',$ where $V'$ is as in (iv).
\end{enumerate}
 \end{prop}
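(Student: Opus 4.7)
The plan is to handle the six parts in the order (i), (ii); (v); (iii); (vi); (iv). Parts (i) and (ii) are immediate from Lemma~\ref{lem some computations}(3) and its right-sided analog for $b$: writing $x=\alpha_x a+x_0+x_\lambda$ for the left $a$-decomposition, $a(ax)=\alpha_x(1-\lambda)a+\lambda(ax)\in Fa+F(ax)$, and symmetrically $(xb)b\in Fb+F(xb)$.

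For (v), I would check each of the ten products of $a$ with the generators $a,b,ab,ba,aba$ of $V'$. The identity $L_aR_a=R_aL_a$ handles $a(ba)=(ab)a=aba$; (i) and its right-sided analog $(ya)a\in Fa+F(ya)$ take care of $a(ab)$, $a(aba)=a(a(ba))$, $(ba)a$, and $(aba)a=((ab)a)a$.

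Part (iii) is the main computation. I would write $\sigma:=ab-\delta'a-\lambda b=\gamma a+w$ with $w\in A_0(L_a)$ via Lemma~\ref{lem gs}(i), and exploit that $bab=b(ab)=(ba)b$ is unambiguous since $L_bR_b=R_bL_b$. Expanding $ab=\delta'a+\lambda b+\sigma$ in both $(ab)(ab)$ and $a(bab)$ and using $(ab)a=a(ba)=aba$, the $aba$-terms cancel and the difference becomes
\[
\lambda(1-\delta')(\alpha_ab-ab)+\bigl((ab)w-a(bw)\bigr).
\]
The first summand lies in $V$ by (ii). For the second, Seress' Lemma (Lemma~\ref{lem seress}) with $y=w\in A_0(L_a)\subseteq Fa+A_0(L_a)$ gives $a(bw)=(ab)w+a(b_0w)$, so the bracket equals $-a(b_0w)$; and $b_0w\in A_0(L_a)\cdot A_0(L_a)\subseteq Fa+A_0(L_a)$ by the basic fusion rule, whence $a(b_0w)\in Fa\subseteq V$.

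The main obstacle is (vi). Applying Lemma~\ref{lem some computations}(5) to the axis $b$ gives $bab=\alpha_ab+\lambda'\delta'a_{\lambda',\delta'}$, so the problem reduces to placing the $(\lambda',\delta')$-component $a_{\lambda',\delta'}$ of $a$ (with respect to $b$) inside $V'$. My plan is to combine the explicit form of $\sigma'$ from the symmetric of Lemma~\ref{lem gs}, the observation that each $a$-eigenspace component $b_{\mu,\nu}$ of $b$ can be written as an $F$-linear combination of $a,b,ab,ba,aba$ (by inverting the formulas for $ab,ba,aba$), and the idempotent relation $a^2=a$ expanded in $b$-eigenspace components together with the involutory fusion rule for $b$. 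The delicate bookkeeping needed to extract a scalar identity isolating $a_{\lambda',\delta'}$ modulo the known $V$-contributions is the step I expect to be hardest. Once (vi) is in hand, part (iv) is automatic from (v) and (vi): $a(bab)\in aV'\subseteq V'$, $(bab)a\in V'a\subseteq V'$, and $a(bab)a=a((bab)a)\in aV'\subseteq V'$.
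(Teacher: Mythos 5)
Parts (i), (ii), (iii) and (v) of your proposal are correct. Your (iii) is essentially the paper's own argument in different packaging: substituting $ab=\gd'a+\gl b+\gs$ and cancelling the $aba$-terms is the same computation as the paper's congruences $a(b\gs)\cong a(bab)-\gd'aba$ and $(ab)\gs\cong(ab)(ab)-\gd'aba$, followed by the identical appeal to Seress' Lemma and the basic fusion rule to kill $(ab)w-a(bw)=-a(b_0w)\in\ff a$. Your direct verification of (v) from (i) and its right-sided analogue is also fine (and correctly shows (v) does not really need (iv)).

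The genuine gap is (vi), which you do not prove. You reduce it, via the $b$-analogue of Lemma~\ref{lem some computations}(5), to showing $a_{\gl',\gd'}\in V'$, and then offer only a plan. That reduction is essentially circular: the relations $ab=\ga_ab+\gd'(a_{0,\gd'}+a_{\gl',\gd'})$ and $ba=\ga_ab+\gl'(a_{\gl',0}+a_{\gl',\gd'})$ only give the two sums, and the quantity that isolates $a_{\gl',\gd'}$ from them is precisely $bab$ itself, which is what you are trying to place in $V'$; it is not clear that $a^2=a$ breaks this circle. Moreover, by scheduling (vi) before (iv) you forfeit the tool the paper actually uses. The paper proves (iv) first, without (vi): from (iii) and (i), $a\bigl((ab)(ab)-a(bab)\bigr)\in V'$; Lemma~\ref{lem some computations}(3) gives $a(a(bab))-\gl a(bab)\in\ff a$; and $a((ab)(ab))\in V'$ because $aba,ab\in V'$ force $b_{\gl,\gd},b_{\gl,0}\in V'$ while the fusion rules make $a((ab)(ab))$ a combination of $a$, $b_{\gl,0}$, $b_{\gl,\gd}$ --- whence $a(bab)\in V'$. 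Only then is (vi) proved, by decomposing $bab$ with respect to $a$ (not $a$ with respect to $b$): part (iv) together with Lemma~\ref{lem some computations}(2),(5) puts $(bab)_{\gl,0},(bab)_{0,\gd},(bab)_{\gl,\gd}$ in $V'$, and the last component $(bab)_{0,0}=\gl(b_{\gl}^2)_{0,0}$ is handled by noting $(ab)^2\in V'$ (hence $b_{\gl}^2\in V'$), that $b_{\gl}^2\in\ff a+A_{0,0}+A_{0,\gd}$ by the involutory fusion rule, and that right multiplication by $a$ then separates off the $(0,\gd)$-part. As written, your (vi), and therefore your (iv), are unproven.
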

\begin{proof}
 (i)\&(ii) Are special cases of Lemma~\ref{lem some computations}.
\medskip

\noindent
(iii)
Write $x\cong y$ if $x-y\in V.$ We claim that
\begin{align}\label{eq1}
&a(b\gs)\cong a(bab)-\gd'aba\\\label{eq2}
&(ab)\gs\cong (ab)(ab)-\gd'aba
\end{align}
For \eqref{eq1}, we have $b\gs=b(ab-\gd'a-\gl b),$ so
\[
a(b\gs) = a(bab)-\gd'aba -\gl ab \cong a(bab)-\gd'aba.
\]
For \eqref{eq2} we have
\[
(ab)\gs = (ab)(ab-\gd'a-\gl b) = (ab)(ab)- \gd' aba -\gl (ab)b \cong (ab)(ab) - \gd'aba.
\]
Now $\gs \in \ff a + A_0(L_a),$ so
\begin{alignat*}{2}
a(bab) &\cong a(b\gs) +  \gd' aba &\quad\text{(by Equation \eqref{eq1})}\\
&\cong (ab)\gs + \gd' aba &\quad\text{(by Seress' Lemma)}\\
&\cong (ab)(ab) &\quad\text{(by Equation \eqref{eq2}).}
\end{alignat*}
\medskip

\noindent
(iv)
By (iii) and (i),
\[
a\big((ab)(ab)-a(bab)\big)\in V'.
\]
 By  Lemma~\ref{lem some computations}(3),
$a(a(bab))-\gl a(bab)\in V'$
implying
\[
a((ab)(ab))-\gl a(bab)\in V'.
\]

Now $aba=\ga_ba+\gl\gd b_{\gl,\gd},$ so
$b_{\gl,\gd}\in V'.$ Hence $b_{\gl,0}\in V',$ because $ ab=\ga_b
a+\gl b_{\gl,0}+\gl b_{\gl,\gd}\in V'.$ By the fusion rules,
$a((ab)(ab))$ is an $\ff$-linear combination of $a, b_{\gl,0}$ and
$b_{\gl,\gd}.$ Hence $a((ab)(ab))\in V',$ so $a(bab)\in V'.$

By symmetry, $(bab)a\in V',$ and then clearly $a(bab)a\in V'.$
\medskip

\noindent
(v)  This follows easily from (i), (ii) and (iv).
\medskip

\noindent
(vi)
By (iv) and Lemma \ref{lem some computations}(5), $(bab)_{\gl,\gd}\in V'.$
By (iv) and Lemma \ref{lem some computations}(2) and its right counterpart,
\[
(bab)_{\gl,0}, (bab)_{0,\gd}\in V'.
\]

We have
\begin{align*}
bab&=(\ga_ba+b_0+b_{\gl})(\ga_ba+\gl b_{\gl})\\
&=\ga_b^2a+\ga_b\gl^2 b_{\gl}+\ga_bb_0a+\gl b_0b_{\gl}+\ga_bb_{\gl}a+\gl b_{\gl}^2\\
&=\overbrace{\ga_b^2a+\gl b_{\gl}^2}+
\overbrace{\ga_b\gl^2 b_{\gl}+\ga_bb_0a+\gl b_0b_{\gl}+\ga_bb_{\gl}a}^{\in A_{0,\gd}+A_{\gl,0}+A_{\gl,\gd}}.
\end{align*}
Hence $(bab)_{0,0}=\gl(b_{\gl}^2)_{0,0}.$  Now By (iii) and (iv),
$(ab)^2\in V',$ so $b_{\gl}^2\in V',$ by Lemma~\ref{lem some
computations}(2) (with $b$ in place of $y$).  Denoting
$x:=b_{\gl}^2,$ we can write $x=\ga_x a+x_{0,0}+x_{0,\gd}.$ Since
$V'a\subseteq V',$ we get $xa\in V',$ so $\ga_x a+\gd x_{0,\gd}\in
V'.$  It follows that $x_{0,\gd}\in V',$ and consequently
$(bab)_{0,0}=\gl x_{0,0}\in V'.$ Hence we see that
$(bab)_{\mu,\nu}\in V',$ for all $\mu,\nu\in\{0,\gl,\gd\},$ so
$bab\in V'.$
\end{proof}

\begin{thm}\label{thm 2-gen}
$A = \ff a + \ff b + \ff ab + \ff ba +\ff aba,$ and thus
has dimension $\le 5.$
\end{thm}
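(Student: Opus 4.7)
The plan is to invoke Theorem~\ref{thm KMt}(ii) with $X = \{a,b\}$ applied to
\[
V' \;:=\; \ff a + \ff b + \ff ab + \ff ba + \ff aba,
\]
which manifestly contains $X$. Once $V'$ is verified to be invariant under left and right multiplication by both $a$ and $b$, Theorem~\ref{thm KMt}(ii) will force $V' = A$, and $\dim A \le 5$ will follow. Closure under $L_a$ and $R_a$ is precisely Proposition~\ref{prop 2-gen}(v), so the only remaining point is closure under $L_b$ and $R_b$.

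For this I would exploit the $a \leftrightarrow b$ symmetry built into the hypothesis of the section: nothing in the statement of Proposition~\ref{prop 2-gen} privileges $a$ over $b$. Setting
\[
V'' \;:=\; \ff a + \ff b + \ff ab + \ff ba + \ff bab,
\]
the argument of Proposition~\ref{prop 2-gen}(v) applied with the roles of $a$ and $b$ interchanged throughout Proposition~\ref{prop 2-gen} yields $bV'' \subseteq V''$ and $V''b \subseteq V''$. By Proposition~\ref{prop 2-gen}(vi), $bab \in V'$, so $V'' \subseteq V'$; and applying (vi) with $a,b$ swapped gives $aba \in V''$, so $V' \subseteq V''$. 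Hence $V' = V''$, and $V'$ is invariant under $L_b$ and $R_b$ as well.

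Combining, $V'$ is $L_a$-, $R_a$-, $L_b$-, and $R_b$-invariant and contains $X = \{a,b\}$, so Theorem~\ref{thm KMt}(ii) yields $V' = A$, proving $\dim A \le 5$.

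The conceptual heart of the argument has already been packaged into Proposition~\ref{prop 2-gen}(vi): the reason a single triple product $aba$ suffices (rather than both $aba$ and $bab$) is that Seress' Lemma together with the involutory fusion rules allows one to rewrite $a(bab)$ modulo $V = \ff a + \ff b + \ff ab + \ff ba$ in terms of $(ab)(ab)$, and then to reconstruct $bab$ from its $(\mu,\nu)$-eigenspace components with respect to $a$. After this is done, the theorem itself reduces to the short closure argument above.
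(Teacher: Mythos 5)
Your proposal is correct and follows essentially the same route as the paper: the paper's proof also sets $V'=\ff a+\ff b+\ff ab+\ff ba+\ff aba$, uses Proposition~\ref{prop 2-gen}(vi) to note that $V'$ also contains $bab$ (hence is symmetric in $a$ and $b$), and then invokes the $a\leftrightarrow b$ and left--right symmetry of Proposition~\ref{prop 2-gen} together with Theorem~\ref{thm KMt}(ii). Your introduction of $V''$ merely makes explicit the symmetry step that the paper compresses into one sentence.
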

\begin{proof}
Let
\[
V':=\ff a + \ff b + \ff ab + \ff ba +\ff aba.
\]
By Proposition \ref{prop 2-gen}, $V' = \ff a + \ff b + \ff ab + \ff
ba +\ff aba +\ff bab.$ By Theorem~\ref{thm KMt} it suffices to show
that $aV', bV', V'a, V'b \subseteq V'.$ By symmetry (with respect
both to $a,b$ and to working on the left or on the right), it is
enough to show that $aV'\subseteq V',$ but this is immediate from
Proposition \ref{prop 2-gen}(i).
\end{proof}

\begin{lemma}\label{ba in V}
Let $V = \ff a + \ff b + \ff ab .$
If $ba \in V$, then $V=A.$
\end{lemma}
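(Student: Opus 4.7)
The plan is to invoke Theorem \ref{thm KMt}(ii) with $X=\{a,b\}$, so it suffices to verify that each of $aV$, $Va$, $bV$, $Vb$ lies in $V$. Writing $ba=\alpha a+\beta b+\gamma\,ab$ for scalars $\alpha,\beta,\gamma\in\ff$, the three ingredients I would use are: first, Lemma \ref{lem some computations}(3) gives $a(ab)\in\ff a+\ff ab\subseteq V$; second, Proposition \ref{prop 2-gen}(ii) with $x=a$ gives $(ab)b\in\ff b+\ff ab\subseteq V$; and third, the axis identity $L_aR_a=R_aL_a$ (and its $b$-counterpart $L_bR_b=R_bL_b$), which is part of the definition of a two-sided axis, yields the two key relations $(ab)a=a(ba)$ and $b(ab)=(ba)b$.

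With these in hand, the four invariance checks are short. The inclusion $aV\subseteq V$ is immediate from $a\cdot a=a$, $a\cdot b=ab$, and the first ingredient. For $Va\subseteq V$, we have $a\cdot a=a$, $b\cdot a=ba\in V$ by hypothesis, and
\[
(ab)a \;=\; a(ba) \;=\; \alpha a+\beta\,ab+\gamma\,a(ab)\;\in\;V
\]
by the third and first ingredients. The inclusion $Vb\subseteq V$ follows from $a\cdot b=ab$, $b\cdot b=b$, and the second ingredient. Finally, $bV\subseteq V$ follows from $b\cdot a=ba\in V$, $b\cdot b=b$, and
\[
b(ab) \;=\; (ba)b \;=\; \alpha\,ab+\beta b+\gamma\,(ab)b\;\in\;V
\]
by the third and second ingredients. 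Since $a,b\in V$ by construction, Theorem \ref{thm KMt}(ii) yields $V=A$.

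There is no real obstacle here; the proof works precisely because the assumption $ba\in V$ lets us rewrite the two ``awkward'' products $(ab)a$ and $b(ab)$ in terms of $a$, $b$, $ab$, $ba$, $a(ab)$, and $(ab)b$, each of which is already known to lie in $V$. Note that flexibility of $A$ is not used; what is essential is only the commutation of $L_x$ with $R_x$ for the generating axes $x\in\{a,b\}$.
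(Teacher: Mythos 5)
Your proof is correct and follows essentially the same route as the paper's: both arguments show $V$ is invariant under $L_a,R_a,L_b,R_b$ using Proposition \ref{prop 2-gen}(i)--(ii) (equivalently Lemma \ref{lem some computations}(3)), the hypothesis $ba\in V$, and the commutation $L_xR_x=R_xL_x$ to handle $(ab)a$ and $b(ab)$, then invoke Theorem \ref{thm KMt}(ii). Your version merely spells out the scalar expansion of $ba$ explicitly where the paper compresses the same steps into "$aba\in V$, so $V$ is closed under $R_a$" and its $b$-analogue.
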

\begin{proof}
By Proposition~\ref{prop 2-gen}(i), $V$ is closed under $L_a.$ In
particular $aba\in V,$ so $V$ is closed under $R_a.$ Similarly $V$
is closed under $R_b$ by Proposition~\ref{prop 2-gen}(ii), so
$bab\in V,$ and then $V$ is closed under $L_b.$ The Lemma follows
from Theorem~\ref{thm KMt}(ii).
\end{proof}

Next we note that
\begin{rem}\label{rem L_a^2}
For any $x\in A$, Proposition \ref{prop 2-gen}((i)\&(ii)) shows that $L_a^2(x) \in \ff L_a(x)+\ff a,$
and  $R_a^2(x) \in \ff R_a(x)+\ff a.$

We recall the 2-sided decomposition $x = \ga_xa + x_{0,0} +
x_{\gl,0} + x_{0,\gd}+ x_{\gl,\gd}$ of~equation~\eqref{eq
decomposition}.
\end{rem}

\begin{prop}\label{prop V_a(x)}
For $x\in A,$ define the vector space
\[
 V_a(x) = \ff a\oplus\ff x\oplus\ff ax\oplus \ff xa\oplus \ff axa.
\]
 Then
\[
V_a(x) = \ff a
\oplus  \ff x_{0,0} \oplus \ff  x_{\gl,0} \oplus \ff x_{0,\gd}\oplus
\ff x_{\gl,\gd}.
\]
\end{prop}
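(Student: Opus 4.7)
The plan is to prove the equality of the two spaces by writing down the obvious linear relations between the two spanning sets and inverting them, exploiting the fact that $L_aR_a=R_aL_a$ so that $axa$ is unambiguous.

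First I would invoke the $(2)$-sided decomposition of Equation~\eqref{eq decomposition} for~$x$, and then apply $L_a$, $R_a$, and $L_aR_a=R_aL_a$ to get the four identities
\begin{align*}
x &= \ga_x a + x_{0,0} + x_{\gl,0} + x_{0,\gd} + x_{\gl,\gd},\\
ax &= \ga_x a + \gl\, x_{\gl,0} + \gl\, x_{\gl,\gd},\\
xa &= \ga_x a + \gd\, x_{0,\gd} + \gd\, x_{\gl,\gd},\\
axa &= \ga_x a + \gl\gd\, x_{\gl,\gd},
\end{align*}
which are essentially Lemma~\ref{lem some computations}(2) and~(5) applied to~$x$ (together with their right-hand analogues). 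This immediately gives the inclusion
\[
V_a(x) \subseteq \ff a + \ff x_{0,0} + \ff x_{\gl,0} + \ff x_{0,\gd} + \ff x_{\gl,\gd}.
\]

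For the reverse inclusion, I would solve the above system for the eigenspace components, which is easy because $\gl,\gd\ne 0$: from the fourth identity $x_{\gl,\gd} = \frac{1}{\gl\gd}(axa-\ga_x a)\in V_a(x)$, and then from the second and third
\[
x_{\gl,0} = \tfrac{1}{\gl}(ax-\ga_x a) - x_{\gl,\gd}\in V_a(x), \qquad x_{0,\gd} = \tfrac{1}{\gd}(xa-\ga_x a) - x_{\gl,\gd}\in V_a(x),
\]
and finally $x_{0,0} = x - \ga_x a - x_{\gl,0} - x_{0,\gd} - x_{\gl,\gd}\in V_a(x)$. Equivalently, the $5\times 5$ change-of-basis matrix from $\{a,x_{0,0},x_{0,\gd},x_{\gl,0},x_{\gl,\gd}\}$ to $\{a,x,ax,xa,axa\}$ is triangular-like with determinant a nonzero scalar multiple of $\gl^2\gd^2$, so it is invertible.

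Combining the two inclusions yields the asserted equality. I don't expect a genuine obstacle: the only thing to watch for is the legitimacy of writing $axa$ without parentheses, which is exactly guaranteed by the primitive-axis hypothesis $L_aR_a=R_aL_a$ (Definition~\ref{defs main3}(5)); once this is noted, the argument is essentially a Vandermonde-style inversion along the lines of Proposition~\ref{prop decomposition}, but now performed simultaneously for the $L_a$- and $R_a$-eigenvalue decompositions.
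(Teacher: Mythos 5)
Your proof is correct and takes essentially the same route as the paper: the paper's own argument invokes Proposition~\ref{prop decomposition} on the left and on the right together with Remark~\ref{rem L_a^2}, which amounts to precisely the four identities you list and their inversion using $\gl,\gd\neq 0$. You simply carry out that Vandermonde-style inversion explicitly rather than citing those two results, which is fine.
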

\begin{proof}
Applying a special case of Proposition~\ref{prop decomposition} both on the
left and right,
\[
\textstyle{\sum _{i,j=0}^{2} \ff L_a^iR_a^jx =\ff a
\oplus  \ff x_{0,0} \oplus \ff  x_{\gl,0} \oplus \ff
x_{0,\gd}\oplus \ff x_{\gl,\gd}.}
\]
We conclude with
Remark~\ref{rem L_a^2}, which lets us replace $L_a^2(y)$  and $R_a^2(y)$
by $L_a(y)$, $R_a(y),$ and $\ff a,$ for any $y.$
\end{proof}

As a useful corollary we get

\begin{prop}\label{prop V_a(b)}
$A=\ff a+\ff b_{0,0}+\ff b_{\gl,0}+\ff b_{0,\gd}+\ff b_{\gl,\gd}.$
\end{prop}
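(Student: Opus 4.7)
The plan is to derive this immediately by combining Theorem~\ref{thm 2-gen} with Proposition~\ref{prop V_a(x)} applied to the element $x=b$.

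First, I would note that by Theorem~\ref{thm 2-gen},
\[
A = \ff a + \ff b + \ff ab + \ff ba + \ff aba.
\]
On the other hand, specializing $x=b$ in the definition from Proposition~\ref{prop V_a(x)} gives
\[
V_a(b) = \ff a + \ff b + \ff ab + \ff ba + \ff aba,
\]
so $A = V_a(b)$.

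Then I invoke the conclusion of Proposition~\ref{prop V_a(x)} itself, which identifies
\[
V_a(b) = \ff a \oplus \ff b_{0,0} \oplus \ff b_{\gl,0} \oplus \ff b_{0,\gd} \oplus \ff b_{\gl,\gd}.
\]
Combining the two identifications yields the asserted decomposition $A = \ff a + \ff b_{0,0} + \ff b_{\gl,0} + \ff b_{0,\gd} + \ff b_{\gl,\gd}$. No real obstacle arises — all the substantive content has been done in Theorem~\ref{thm 2-gen} (which bounds the dimension by $5$ via the Seress-type computations in Proposition~\ref{prop 2-gen}) and in Proposition~\ref{prop V_a(x)} (which uses the Vandermonde-style argument of Proposition~\ref{prop decomposition} together with the quadratic relations $L_a^2(y)\in \ff L_a(y)+\ff a$ and $R_a^2(y)\in \ff R_a(y)+\ff a$ from Remark~\ref{rem L_a^2}).
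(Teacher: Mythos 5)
Your proof is correct and matches the paper's own argument exactly: the paper likewise deduces the proposition by observing that Theorem~\ref{thm 2-gen} gives $A=\ff a + \ff b + \ff ab + \ff ba +\ff aba = V_a(b)$ and then invoking Proposition~\ref{prop V_a(x)}. Nothing is missing.
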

\begin{proof}
By Theorem \ref{thm 2-gen},
\[
A=\ff a + \ff b + \ff ab + \ff ba +\ff aba,
\]
so the proposition follows from Proposition \ref{prop V_a(x)}, because $A=V_a(b).$
\end{proof}

\begin{lemma}\label{lem ab=0}
If $ab=0,$ then $ba=0,$ and $A=\ff a+\ff b.$
\end{lemma}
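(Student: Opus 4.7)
The plan is to exploit the two decompositions: of $b$ in the $a$-eigenspaces and of $a$ in the $b$-eigenspaces. From $ab = 0$ and Lemma~\ref{lem some computations}(2) applied to $b$, we have $ab = \ga_b a + \gl(b_{\gl,0}+b_{\gl,\gd}) = 0$; since the three summands lie in distinct pieces of the $\zz_2\times\zz_2$-grading relative to~$a$, this forces $\ga_b = 0$ and $b_{\gl,0}=b_{\gl,\gd}=0$, so $b = b_{0,0} + b_{0,\gd}$. Symmetrically, computing $ab = R_b(a) = \ga_a b + \gd'(a_{0,\gd'}+a_{\gl',\gd'}) = 0$ yields $\ga_a = 0$ and $a_{0,\gd'}=a_{\gl',\gd'}=0$, so $a = a_{0,0}+a_{\gl',0}$ relative to~$b$.

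Next I would compute $ba$ in both decompositions: $ba = R_a(b) = \gd\, b_{0,\gd}$ and $ba = L_b(a) = \gl'\, a_{\gl',0}$. Hence $b_{0,\gd}$ is a scalar multiple of $a_{\gl',0}\in A_{\gl',0}(b)$, which yields the key ``cross-coordinate'' identities
\[
L_b(b_{0,\gd}) \;=\; \gl'\, b_{0,\gd}, \qquad R_b(b_{0,\gd}) \;=\; 0.
\]
These let $b$ act on $b_{0,\gd}$ despite the latter being defined through $a$'s decomposition. Using $b^2 = b$ and $b_{0,0} = b - b_{0,\gd}$, I expand
\[
b_{0,0}^2 \;=\; b^2 - b\cdot b_{0,\gd} - b_{0,\gd}\cdot b + b_{0,\gd}^2 \;=\; b - \gl' b_{0,\gd} + b_{0,\gd}^2,
\]
which rearranges to $b_{0,0}^2 - b_{0,\gd}^2 = b_{0,0} + (1-\gl')\, b_{0,\gd}$. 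But the involutory fusion rules for $a$ place both $b_{0,0}^2$ and $b_{0,\gd}^2$ in $\ff a + A_{0,0}(a)$, so the left-hand side has trivial component in $A_{0,\gd}(a)$. Hence $(1-\gl')\, b_{0,\gd} = 0$, and since $\gl'\neq 1$ we conclude $b_{0,\gd} = 0$.

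Therefore $ba = \gd\, b_{0,\gd} = 0$ and $b = b_{0,0}$, so Proposition~\ref{prop V_a(b)} collapses $A$ to $\ff a + \ff b_{0,0} = \ff a + \ff b$. The only non-routine step is the cross-coordinate bridge in the second paragraph: the action of $L_b$ and $R_b$ on $b_{0,\gd}$ becomes accessible only after recognizing $b_{0,\gd}$ as proportional to an element of the \emph{other} decomposition ($a_{\gl',0}$); once that is in place, the clash with the fusion rules closes the argument immediately.
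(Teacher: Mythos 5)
Your argument is correct, but the decisive step is genuinely different from the paper's. Both proofs open the same way: $ab=0$ forces $\ga_b=b_{\gl,0}=b_{\gl,\gd}=0$, so $b=b_{0,0}+b_{0,\gd}$, and (decomposing with respect to $b$) $a=a_{0,0}+a_{\gl',0}$. From there the paper argues by a dimension count: Proposition~\ref{prop V_a(b)} gives $\dim(A)\le 3$; assuming $b_{0,\gd}\ne 0$, the identity $b(ab)=(ba)b$ (from $L_bR_b=R_bL_b$) yields $b_{0,\gd}b=0$, so $A_0(R_b)=\ff a+\ff b_{0,\gd}$ is $2$-dimensional, and $a_{\gl',0}\ne 0$ would then push $\dim(A)$ to at least $4$, a contradiction. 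You instead compute $ba$ two ways to get $\gd\, b_{0,\gd}=\gl' a_{\gl',0}$, which places $b_{0,\gd}$ inside $A_{\gl',0}(b)$ and lets $L_b$ and $R_b$ act on it; then expanding $b=b^2=(b_{0,0}+b_{0,\gd})^2$ and projecting onto $A_{0,\gd}(a)$ --- where the $\zz_2\times\zz_2$-grading of $a$ puts both $b_{0,0}^2$ and $b_{0,\gd}^2$ in $\ff a+A_{0,0}(a)$ --- leaves $(1-\gl')b_{0,\gd}=0$, killing $b_{0,\gd}$ since $\gl'\ne 1$. Your route trades the paper's dimension count for the idempotent relation $b^2=b$ plus the involutory fusion rules, and never needs $b(ab)=(ba)b$; the paper's version is shorter once Proposition~\ref{prop V_a(b)} is available, while yours is more self-contained at the level of the grading and makes explicit the useful identification $b_{0,\gd}\in\ff a_{\gl',0}$. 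Both are valid proofs of the lemma.
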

\begin{proof}
Since $ab = 0,$ we get that $\ga_b a+\gl b_{\gl,0}+\gl b_{\gl,\gd}=0.$
It follows that $\ga_b=b_{\gl,0}=b_{\gl,\gd}=0.$  Hence $b=b_{0,0}+b_{0,\gd}.$
By Proposition \ref{prop V_a(b)}, $\dim(A)\le 3.$

If $b_{0,\gd}=0,$ then $ba=0,$ and we are done.  So we may assume that $b_{0,\gd}\ne 0.$
Now $0=b(ab)=(ba)b=\gd b_{0,\gd}b.$  Hence $b_{0,\gd}b=0.$  Thus
$A_0(R_b)=\ff a+\ff b_{0,\gd}$ is $2$-dimensional. Since $ab=0,$ we
see that $a=a_{0,0}+a_{\gl',0}.$  If $a_{\gl',0}=0,$ then $ba=0.$
If $a_{\gl',0}\ne 0,$ then $A_{\gl',0}(b)\ne 0,$
and we get that $\dim(A)\ge 4,$ a contradiction.
\end{proof}

\begin{lemma}\label{ab in Fa+Fb}
If $ab\in \ff a+\ff b,$ then $A=\ff a+\ff b$ has dimension $2.$
\end{lemma}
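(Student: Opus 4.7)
The plan is to use the decomposition machinery together with the idempotent identity $b^2 = b$ to force the five-term expansion from Proposition~\ref{prop V_a(b)} to collapse to $\ff a + \ff b$. Write $ab = \alpha a + \beta b$ for some $\alpha, \beta \in \ff$. I will combine this with Lemma~\ref{lem some computations}(2), which gives $ab = \ga_b a + \gl b_{\gl,0} + \gl b_{\gl,\gd}$, and with the two-sided decomposition $b = \ga_b a + b_{0,0} + b_{0,\gd} + b_{\gl,0} + b_{\gl,\gd}$. Substituting the latter into $\alpha a + \beta b$ and comparing components in the direct sum $A_{1,1}\oplus A_{0,0}\oplus A_{0,\gd}\oplus A_{\gl,0}\oplus A_{\gl,\gd}$ yields
\[
\beta b_{0,0} = 0 = \beta b_{0,\gd}, \qquad (\beta - \gl)\, b_{\gl,0} = 0 = (\beta - \gl)\, b_{\gl,\gd}.
\]

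If $\beta = 0$, then $ab = \alpha a \in \ff a$, so Lemma~\ref{lem 1.660}(i) gives $ab = 0$, and Lemma~\ref{lem ab=0} finishes the proof. Otherwise $b_{0,0} = b_{0,\gd} = 0$; the degenerate case in which all four mixed components of $b$ vanish forces $b = \ga_b a$ and hence $b = a$ and $A = \ff a$, so we may assume $\beta = \gl$ and write
\[
b = \ga_b a + c, \qquad c := b_{\gl,0} + b_{\gl,\gd} \in A_\gl(L_a), \qquad c \neq 0.
\]

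The crux is to show that at most one of $b_{\gl,0}$ and $b_{\gl,\gd}$ is nonzero. Applying $R_a$ to $b$ gives $ba = \ga_b a + \gd b_{\gl,\gd}$, so $ac = \gl c$ and $ca = \gd b_{\gl,\gd}$; expanding $b^2 = (\ga_b a + c)^2$ and using $b^2 = b$ yields
\[
c^2 = (\ga_b - \ga_b^2)\, a + (1 - \ga_b \gl)\, b_{\gl,0} + (1 - \ga_b \gl - \ga_b \gd)\, b_{\gl,\gd}.
\]
By the left involutory fusion rule, $c^2 \in A_\gl(L_a)^2 \subseteq \ff a + A_0(L_a)$, so the $A_\gl$-components of the right-hand side must vanish, giving
\[
(1 - \ga_b \gl)\, b_{\gl,0} = 0, \qquad (1 - \ga_b(\gl + \gd))\, b_{\gl,\gd} = 0.
\]
If both $b_{\gl,0}$ and $b_{\gl,\gd}$ were nonzero, both scalar factors would vanish, forcing $\ga_b \gl = 1 = \ga_b(\gl + \gd)$ and hence $\gd = 0$, contradicting $\gd \notin \{0,1\}$.

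Consequently $c$ is a scalar multiple of a single eigenvector, so Proposition~\ref{prop V_a(b)} collapses to $A = \ff a + \ff c = \ff a + \ff b$. The main obstacle I anticipate is the bookkeeping of the $b^2$ expansion and making sure the fusion rule kills both $A_\gl$-components simultaneously; once that is organized, the incompatibility of $\ga_b\gl = 1$ with $\ga_b(\gl+\gd) = 1$ via $\gd \ne 0$ closes the argument.
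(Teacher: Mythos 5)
Your proof is correct. The opening coincides with the paper's: write $ab=\ga a+\gb b$, dispose of $\gb=0$ via Lemma~\ref{lem 1.660}(i) and Lemma~\ref{lem ab=0}, and compare components of $ab=\ga_b a+\gl b_{\gl,0}+\gl b_{\gl,\gd}$ against $\ga a+\gb b$ to force $b_{0,0}=b_{0,\gd}=0$. You diverge in how you kill the remaining freedom in $b_{\gl,0},b_{\gl,\gd}$. The paper stays with the coarse left decomposition: Proposition~\ref{prop V_a(b)} gives $A_0(L_a)=0$, so the fusion rule yields $b_\gl^2\in\ff a$; it then writes $b_\gl=\ga_1a+\gb_1b$ (possible because $ab\in\ff a+\ff b$ and $\gl\ne0$), expands $b_\gl^2$ to extract $ba\in\ff a+\ff b$, and closes with Lemma~\ref{ba in V}. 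You instead expand the idempotent relation $b^2=b$, apply the same involutory fusion rule $A_\gl^2\subseteq\ff a+A_0$ to annihilate the $A_\gl$-component of $c^2$, and then exploit the finer splitting $A_\gl(L_a)=A_{\gl,0}\oplus A_{\gl,\gd}$ (valid since $\gd\ne 0$) to obtain the two scalar equations $(1-\ga_b\gl)b_{\gl,0}=0$ and $(1-\ga_b(\gl+\gd))b_{\gl,\gd}=0$, whose simultaneous failure would force $\gd=0$. Your route avoids Lemma~\ref{ba in V} entirely and delivers a slightly sharper intermediate fact (exactly one of $b_{\gl,0},b_{\gl,\gd}$ survives, together with the corresponding constraint on $\ga_b$), at the cost of a two-sided bookkeeping of the $b^2$ expansion that the paper's one-sided computation sidesteps; both arguments ultimately rest on the same two pillars, namely the fusion rule applied to the square of the $\gl$-part of $b$ and the spanning results of Section~4. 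Your explicit treatment of the degenerate case $b=\ga_b a$ is a small bonus the paper leaves implicit.
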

\begin{proof}
Write $ab=\ga a+\gb b.$  If $\gb =0,$ then, by Lemma \ref{lem
1.660}, $ab=0,$ so we are done by Lemma \ref{lem ab=0}.  Hence
$\gb\ne 0.$ We then see that
\[
\ga_b a+\gl(b_{\gl,0}+b_{\gl,\gd})=ab=\ga a+\gb (\ga_ab+b_{0,0}+b_{0,\gd}+b_{\gl,0}+b_{\gl,\gd}),
\]
so $b_{0,0}=0=b_{0,\gd}.$  Thus
$b=\ga_b a+b_{\gl,0}+b_{\gl,\gd}=\ga_b a+b_{\gl}.$  By
Proposition~\ref{prop V_a(b)}, $\dim(A)=3,$ and $A$ is spanned by $a, b_{\gl,0}, b_{\gl,\gd}.$
In particular, $A_0(L_a)=0.$  Note that $b_{\gl}^2\in \ff a+A_0(L_a),$
so $b_{\gl}^2\in\ff a.$  Now $ab=\ga_b a+\gl b_{\gl},$ hence $b_{\gl}\in\ff a+\ff b.$
Write $b_{\gl}=\ga_1 a+\gb_1 b,$ then, as above, using Lemma \ref{lem 1.660}, $\ga_1\ne 0\ne \gb_1.$
But now we see that
\[
\ff a \ni b_{\gl}^2=\ga_1^2 a+\gb_1^2 b+\ga_1\gb_1 ab+\ga_1\gb_1 ba.
\]
This shows that $ba\in\ff a+\ff b,$ so $A=\ff a+\ff b.$
\end{proof}

\begin{thm}\label{thm dim3}
Assume $\dim(A)=3.$  Then $a$ and $b$ are of Jordan type.
\end{thm}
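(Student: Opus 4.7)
The plan is to start with Proposition~\ref{prop V_a(b)}, which expresses $A$ as the span of $a$ together with the four joint eigenvectors $b_{0,0}, b_{\gl,0}, b_{0,\gd}, b_{\gl,\gd}$ of $L_a$ and $R_a$. Since $a\in A_{1,1}$ and the nonzero $b_{\mu,\nu}$ lie in pairwise distinct joint eigenspaces of the commuting pair $L_a, R_a$, these five vectors are linearly independent whenever they are nonzero; the hypothesis $\dim A=3$ then forces exactly two of $b_{0,0}, b_{\gl,0}, b_{0,\gd}, b_{\gl,\gd}$ to be nonzero. The statement ``$a$ is of Jordan type'' is exactly the assertion that the two survivors are $b_{0,0}$ and $b_{\gl,\gd}$, so I will derive a contradiction from each of the other three pairings. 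By the left-right symmetry $L\leftrightarrow R,\ \gl\leftrightarrow\gd$, I may assume $b_{\gl,0}\ne 0$ and handle three cases according to which of $b_{0,0}, b_{\gl,\gd}, b_{0,\gd}$ is the second nonzero component.

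In the case $\{b_{0,0}, b_{\gl,0}\}$, I will observe that $b=\ga_b a+b_{0,0}+b_{\gl,0}$, so $ba=\ga_b a\in\ff a$; the left analogue of Lemma~\ref{lem 1.660}(i) (valid because $b$ is a primitive left axis) forces $ba=0$, whereupon the symmetric form of Lemma~\ref{lem ab=0} yields $\dim A=2$, a contradiction. In the case $\{b_{\gl,0}, b_{\gl,\gd}\}$, I will use the $\zz_2\times\zz_2$-grading together with $A_{0,0}=A_{0,\gd}=0$ to conclude that each of $b_{\gl,0}^2, b_{\gl,\gd}^2$ lies in $\ff a$ while $b_{\gl,0} b_{\gl,\gd}=b_{\gl,\gd} b_{\gl,0}=0$; expanding $b^2=b$ and reading off the coefficients of $b_{\gl,0}$ and $b_{\gl,\gd}$ gives $\ga_b\gl=1$ and $\ga_b(\gl+\gd)=1$, forcing $\gd=0$, a contradiction.

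The main obstacle is the case $\{b_{\gl,0}, b_{0,\gd}\}$, where neither fusion nor $b^2=b$ alone suffices. Here $b=\ga_b a+b_{\gl,0}+b_{0,\gd}$; the fusion rules (with $A_{0,0}=A_{\gl,\gd}=0$) again reduce each squared component to a scalar multiple of $a$ and kill the cross products, and $b^2=b$ merely forces $\gl=\gd$ and $\ga_b=1/\gl$. My plan is then to invoke the axis commutation $L_b R_b=R_b L_b$: I will compute $b(ab)$ and $(ba)b$ explicitly in the basis $\{a, b_{\gl,0}, b_{0,\gd}\}$ from the products already established, and the equality of the coefficients of $b_{\gl,0}$ on the two sides will produce $\gl=1$, contradicting $\gl\notin\{0,1\}$. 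With all three configurations eliminated, $a$ is of Jordan type, and the identical argument with the roles of $a$ and $b$ interchanged shows that $b$ is of Jordan type as well.
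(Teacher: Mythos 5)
Your proof is correct and follows essentially the same route as the paper: reduce to $A=\ff a\oplus\ff b_{0,0}\oplus\ff b_{\gl,0}\oplus\ff b_{0,\gd}\oplus\ff b_{\gl,\gd}$ via Proposition~\ref{prop V_a(b)}, run a case analysis on which two components of $b$ survive, and eliminate the non-Jordan configurations using the fusion rules, $b^2=b$, the ``$ba\in\ff a$ forces $ba=0$'' argument of Lemma~\ref{lem 1.660}, and $(ba)b=b(ab)$ for the hard case $\{b_{\gl,0},b_{0,\gd}\}$ --- exactly the paper's toolkit. The only difference is bookkeeping: where you read $\ga_b\gl=1$, $\ga_b(\gl+\gd)=1$, etc.\ off the coefficients of $b^2=b$ (which also lets you skip the paper's separate $\ga_b=0$ subcase), the paper instead rules out the configurations with $b_{0,0}=b_{0,\gd}=0$ (resp.\ $b_{0,0}=b_{\gl,0}=0$) by noting that $\gs$ (resp.\ $\gs'$) would then lie in $\ff a+\ff b$, contradicting Lemma~\ref{ab in Fa+Fb}; both versions are sound.
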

\begin{proof}
By Lemma \ref{ab in Fa+Fb}, $a, b,$ and $ ab$ are independent; hence
$\gs\notin\ff a+\ff b.$ But $\gs=\gc a-\gl (b_{0,\gd}+b_{0,0}),$ so
at least one of $b_{0,0}, b_{0,\gd}$ is non-zero. Similarly, by
Lemma \ref{ab in Fa+Fb}, $a,b,ba$ are independent, so $\gs'\notin
\ff a+\ff b,$ and at least one of $b_{0,0}, b_{\gl,0}$ is non-zero.

Assume first that $b_{0,0}=0.$  Then $b_{\gl,0}\ne 0\ne b_{0,\gd}.$  Since $\dim(A)=3,$ Proposition \ref{prop V_a(b)} implies that $b_{\gl,\gd}=0.$
Thus $b=\ga_ba+b_{\gl,0}+b_{0,\gd}.$  Suppose $\ga_b=0.$  Then
\[
b=b^2=b_{\gl,0}^2+b_{0,\gd}^2+b_{\gl,0}b_{0,\gd}+b_{0,\gd}b_{\gl,0}.
\]
Since $b_{\gl,0}b_{0,\gd}+b_{0,\gd}b_{\gl,0}\in A_{\gl,\gd}=0,$ and $b_{\gl,0}^2+b_{0,\gd}^2\in \ff a+A_{0,0},$
we see that $b\in \ff a +A_{0,0},$ so $ab\in\ff a.$ By Lemma \ref{lem 1.660}, $ab=0,$ a contradiction.

Hence $\ga_b\ne 0.$  But then
\[
(ba)b=(\ga_ba+\gd b_{0,\gd})(\ga_ba+b_{\gl,0}+b_{0,\gd})=\ga_b^2a+\ga_b\gl b_{\gl,0} +\ga_b\gd^2b_{0,\gd}+\gd  b_{0,\gd}b_{\gl,0}+\gd b_{0,\gd}^2
\]
and
\begin{align*}
b(ab)&=(\ga_ba+b_{\gl,0}+b_{0,\gd})(\ga_b a+\gl b_{\gl,0})\\
&=\ga_b^2a+\ga_b\gd b_{0,\gd}+\ga_b\gl^2b_{\gl,0}+\gl b_{\gl,0}^2+\gl b_{0,\gd}b_{\gl,0}.
\end{align*}
Since $(ba)b=b(ab),$ this implies $\ga_b\gl b_{\gl,0}=\ga_b\gl^2b_{\gl,0},$ so $b_{\gl,0}=0,$ a contradiction.

Hence $b_{0,0}\ne 0.$  Suppose $b_{\gl,0}\ne 0.$  Then $b=\ga_b a+b_{0,0}+b_{\gl,0}.$  But then
$ba=\ga_b a,$ and we saw that this implies $ba=0,$ a contradiction.  Thus $b_{\gl,0}=0.$
Symmetrically $b_{0,\gd}=0,$ so $a$ is of Jordan type.  By symmetry so is~$b.$
\end{proof}

\section{A few observations and some questions.}
In this section $A$ is generated by a set of axes $X,$
where each $x\in X$ is primitive of type $(\gl_x,\gd_x).$
Recall the notation $\widebar{X}$ from \ref{not G(X)}.

\begin{lemma}\label{lem 00}
Let $a\in X.$
\begin{enumerate}\eroman
\item
For any $z\in A,$ there is a finite subset $Y\subseteq\widebar{X}$ such that
$z_{0,0}\in \sum_{x\in Y} \ff x_{0,0}.$ Here $z_{0,0}, x_{0,0}\in A_{0,0}(a).$

\item
To determine whether  $(A_{0,0}(a))^2 \subseteq A_{0,0}(a),$  it  is
enough to check that $(B_{0,0}(a))^2 \subseteq B_{0,0}(a),$ for any subalgebra
$B$ of $A$ generated by $a,x,x',$ with $x,x'\in \widebar{X}.$
\end{enumerate}
\end{lemma}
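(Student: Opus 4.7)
The plan is to handle (i) by pure linear algebra and then bootstrap to (ii) using bilinearity of multiplication together with a careful check that eigenspace decompositions descend to suitable finitely generated subalgebras.

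For (i), Theorem~\ref{thm KMt}(i) tells us that $\widebar{X}$ spans $A$, so I can write $z=\sum_{x\in Y}c_x x$ for some finite $Y\subseteq\widebar{X}$ and scalars $c_x\in\ff$. The $2$-sided decomposition with respect to the fixed axis $a\in X$ from equation~\eqref{eq decomposition} expresses $A$ as an internal direct sum, so the projection $\pi:A\to A_{0,0}(a)$ sending any $w\in A$ to its $(0,0)$-component $w_{0,0}$ is $\ff$-linear. Applying $\pi$ term-by-term to $z=\sum_{x\in Y}c_xx$ yields $z_{0,0}=\sum_{x\in Y}c_x x_{0,0}$, which is precisely the required expression.

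For (ii), suppose $(B_{0,0}(a))^2\subseteq B_{0,0}(a)$ for every subalgebra $B$ of the form $\langle a,x,x'\rangle$ with $x,x'\in\widebar{X}$, and take arbitrary $u,v\in A_{0,0}(a)$. By (i) I can write $u=\sum_{x\in Y}c_x x_{0,0}$ and $v=\sum_{x'\in Y'}d_{x'} x'_{0,0}$ for finite subsets $Y,Y'\subseteq\widebar X$, and by bilinearity of multiplication it suffices to verify $x_{0,0}\cdot x'_{0,0}\in A_{0,0}(a)$ for each individual pair $x,x'\in\widebar X$. Fix such a pair and let $B:=\langle a,x,x'\rangle$. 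Since $a\in B$ and $B$ is a subalgebra, $B$ is closed under $L_a$ and $R_a$; re-running the argument in the proof of Theorem~\ref{thm KMt}(ii) inside $B$ shows that for every $v\in B$ the elements $ava,\ av-ava,\ va-ava,$ and $v-\ga_v a-v_{\gl,0}-v_{0,\gd}-v_{\gl,\gd}$ all lie in $B$, so each component $v_{\mu,\nu}$ of the decomposition of $v$ with respect to $a$ already lies in $B$; in particular the components coincide with those computed in $A$, and $B_{0,0}(a)=B\cap A_{0,0}(a)$. Applying this to $v=x$ and $v=x'$ gives $x_{0,0},x'_{0,0}\in B_{0,0}(a)$, and the hypothesis on $B$ then forces $x_{0,0}\cdot x'_{0,0}\in B_{0,0}(a)\subseteq A_{0,0}(a)$, which closes the proof.

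The only subtle point — and the one that takes the most care — is precisely this identification of the eigenspace components of $v\in B$ as the same elements regardless of whether the decomposition is carried out inside $B$ or inside $A$; once that compatibility is established, the remainder is an immediate combination of (i), bilinearity, and the standing hypothesis.
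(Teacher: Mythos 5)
Your proof is correct and takes essentially the same route as the paper's: part (i) is the same linear projection argument after expanding $z$ over $\widebar{X}$, and part (ii) reduces to products $x_{0,0}x'_{0,0}$ by bilinearity exactly as in the paper. The only difference is that you spell out the step the paper dismisses as ``obvious'' --- namely that $x_{0,0},x'_{0,0}$ lie in $B_{0,0}(a)$ for $B=\langle a,x,x'\rangle$, via the component argument of Theorem~\ref{thm KMt}(ii) --- which is a welcome clarification rather than a deviation.
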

\begin{proof}
(i) This is obvious. Just write $z=\sum_{x\in Y}\gc_xx,$ for some
finite subset $Y\subseteq X.$ Then $z_{0,0}=\sum_{x\in
Y}\gc_xx_{0,0}.$
\medskip

\noindent
(ii)
Let $y',y\in A_{0,0}.$ Using (i) write
\[
y=\sum_{x \in Y}\gc_x x_{0,0},\qquad y'=\sum_{x' \in Y'}\gc'_{x'} x'_{0,0},
\]
with $Y, Y',$ finite subsets of $X.$  Then
\[
\textstyle{yy'\in \sum_{x\in Y, x'\in Y'}\ff x_{0,0}x'_{0,0}.}
\]
The assertion of (ii) is now obvious.
\end{proof}

\begin{lemma}\label{lem jordan} $ $
Suppose that whenever $B\subseteq A$ is generated by $2$
primitive axes of type $(\gl,\gd)$ and $(\gl',\gd'),$
then $\dim(B)\le 3.$  Then
all axes in $A$ are of Jordan type.
\end{lemma}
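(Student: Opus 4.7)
The plan is to show that every primitive axis $a\in A$ of type $(\gl,\gd)$ is of Jordan type by showing that for each $y\in\widebar{X}$ the components $y_{\gl,0}$ and $y_{0,\gd}$ in the $a$-decomposition $y=\ga_y a+y_{0,0}+y_{\gl,0}+y_{0,\gd}+y_{\gl,\gd}$ vanish. Since $A$ is spanned by $\widebar{X}$ by Theorem~\ref{thm KMt}(i) and eigenspace projections are linear, this will force $A_{\gl,0}(a)=0=A_{0,\gd}(a).$ Given $y\in\widebar{X},$ note that $y$ is itself a primitive axis (automorphisms preserve primitivity, type, and the commuting condition $L_aR_a=R_aL_a$); set $B:=\langle a,y\rangle,$ so that $\dim B\le 3$ by hypothesis. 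Lemma~\ref{lem some computations} (parts (2) and (5), together with their right-hand analogues) expresses each eigencomponent $y_{\mu,\nu}$ as an $\ff$-linear combination of $a,\,y,\,ay,\,ya,\,aya,$ so these components lie in $B;$ and they coincide with the components computed in $A,$ since the eigenspace decomposition is intrinsic.

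If $\dim B=3,$ Theorem~\ref{thm dim3} immediately yields that $a$ and $y$ are of Jordan type in $B,$ so $B_{\gl,0}(a)=0=B_{0,\gd}(a),$ and therefore $y_{\gl,0}=y_{0,\gd}=0.$ If $\dim B\le 2,$ then the four components of $y$ sit in the four distinct eigenspaces $A_{0,0}(a),\,A_{\gl,0}(a),\,A_{0,\gd}(a),\,A_{\gl,\gd}(a),$ and all lie in $B=\ff a+\ff y,$ so at most one of them can be nonzero. The only dangerous possibilities are the two ``off-diagonal'' cases. If $y=\ga_y a+y_{\gl,0}$ with $y_{\gl,0}\ne 0,$ then $ya=\ga_y a;$ by the right-hand version of Lemma~\ref{lem 1.660}(i) this forces $ya=0,$ whence $\ga_y=0$ and $y=y_{\gl,0}\in A_{\gl}(L_a).$ But then $y=y^2\in \ff a+A_0(L_a)$ by the left involutory fusion rules, contradicting $\gl\notin\{0,1\}.$ The case $y=\ga_y a+y_{0,\gd}$ is ruled out symmetrically. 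In every remaining scenario ($y=\ga_y a,$ $y=\ga_y a+y_{0,0},$ or $y=\ga_y a+y_{\gl,\gd}$) the off-diagonal components $y_{\gl,0},y_{0,\gd}$ are zero by inspection.

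The main obstacle I anticipate is precisely this dimension-at-most-two bookkeeping, since Theorem~\ref{thm dim3} directly covers only $\dim B=3;$ once the fusion rules together with Lemma~\ref{lem 1.660} are used to eliminate the two off-diagonal possibilities, however, the dispatch is forced. Running the argument uniformly over $y\in\widebar{X},$ and over the arbitrary primitive axis $a\in A,$ then yields the claim that every axis of $A$ is of Jordan type.
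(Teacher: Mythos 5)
Your proof is correct, and its core---reducing to the two-generated subalgebras $B=\langle a,y\rangle$ for $y\in\widebar{X}$ and invoking Theorem~\ref{thm dim3} when $\dim B=3$---matches the paper's. The difference is in the low-dimensional case and in the level of detail. The paper's proof is a one-line citation: Theorem~\ref{thm dim3} handles $\dim B=3$, while $\dim B\le 2$ is disposed of by appealing to the classification of two-dimensional two-generated algebras in \cite[Theorem A]{RoSe}, from which Jordan type is read off. You instead give a self-contained argument for $\dim B\le 2$: the four eigencomponents of $y$ other than $\ga_y a$ lie in distinct summands of a direct sum yet are confined to the at most one-dimensional complement of $\ff a$ in $B$, so at most one survives, and the two off-diagonal possibilities are killed by Lemma~\ref{lem 1.660}(i) (and its left--right mirror, correctly applied to $ya\in\ff a$) together with the involutory fusion rule $A_\mu^2\subseteq \ff a+A_0$. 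This buys independence from the external reference at the cost of some case bookkeeping. You also make explicit the reduction step that the paper leaves implicit: $A$ is spanned by $\widebar{X}$ (Theorem~\ref{thm KMt}(i)), the eigenprojections are linear, and the components of $y$ with respect to $a$ already lie in $B$ by Lemma~\ref{lem some computations}, so the vanishing of $B_{\gl,0}(a)$ and $B_{0,\gd}(a)$ for every such $B$ really does yield $A_{\gl,0}(a)=0=A_{0,\gd}(a)$ globally. That is a genuine point the one-line proof glosses over, and your write-up is the more complete of the two.
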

 \begin{proof}
This follows immediately from Theorem \ref{thm dim3} and Theorem \cite[Theorem A]{RoSe}.
\end{proof}

\subsection*{Some questions}\hfill
\medskip

\noindent
\begin{enumerate}
\item Suppose  $A$ is generated by axes $a$ and $b$ of respective types $(\gl,\gd)$ and  $(\gl',\gd')$.
Is it true that $\dim(A)\le 3 ?$  (see Lemma \ref{lem jordan}). We have recently shown that $\dim(A)\ne 4,$
so the only remaining case is whether one can have $\dim(A) = 5.$

\item
Suppose that each axis $x\in X$ is of (Jordan) type $(\gl_x,\gd_x).$
Is it true that $(A_{0,0}(x))^2 \subseteq A_{0,0}(x)?$
(By Lemma \ref{lem 00} it is enough to
consider triples of axes).

\item
Suppose $|X|=3.$ Does it follow that $\dim(A)$  is finite?
\end{enumerate}

\end{document}